\theoremstyle{plain}
\newtheorem{theorem}{Theorem}
\newtheorem{lemma}[theorem]{Lemma}
\newtheorem{corollary}[theorem]{Corollary}
\theoremstyle{definition}
\theoremstyle{remark}
\newcommand{\T}{\mathbb{T}}
\newcommand{\R}{\mathbb{R}}
\newcommand{\Z}{\mathbb{Z}}
\newcommand{\C}{\mathbb{C}}
\newcommand{\N}{\mathbb{N}}
\newcommand{\Q}{\mathbb{Q}}
\newcommand{\ip}[2]{\left\langle#1,#2\right\rangle}
\newcommand{\iip}[2]{\left(#1,#2\right)}
\newcommand{\der}{\mathrm{d}}
\renewcommand{\phi}{\varphi}
\newcommand{\abs}[1]{\left| #1 \right|}
\newcommand{\aabs}[1]{\left\| #1 \right\|}
\DeclareMathOperator{\vdiv}{\overset{\scriptstyle v}{\operatorname{div}}}
\DeclareMathOperator{\hdiv}{\overset{\scriptstyle h}{\operatorname{div}}}
\DeclareMathOperator{\vgrad}{\overset{\scriptstyle v}{\nabla}}
\DeclareMathOperator{\hgrad}{\overset{\scriptstyle h}{\nabla}}
\newcommand{\schw}{\mathcal S}
\newcommand{\F}{\mathcal F}
\title{X-ray transforms in pseudo-Riemannian geometry}
\author{Joonas Ilmavirta}
\thanks{Department of Mathematics and Statistics, University of Jyv\"askyl\"a}
\address{Department of Mathematics and Statistics, University of Jyv\"askyl\"a, P.O. Box 35 (MaD) FI-40014 University of Jyv\"askyl\"a, Finland}
\email{joonas.ilmavirta@jyu.fi}
\date{\today}
\begin{document}

\begin{abstract}
We study the problem of recovering a function on a pseudo-Riemannian manifold from its integrals over all null geodesics in three geometries: pseudo-Riemannian products of Riemannian manifolds, Minkowski spaces and tori.
We give proofs of uniqueness anc characterize non-uniqueness in different settings.
Reconstruction is sometimes possible if the signature $(n_1,n_2)$ satisfies $n_1\geq1$ and $n_2\geq2$ or vice versa and always when $n_1,n_2\geq2$.
The proofs are based on a Pestov identity adapted to null geodesics (product manifolds) and Fourier analysis (other geometries).
The problem in a Minkowski space of any signature is a special case of recovering a function in a Euclidean space from its integrals over all lines with any given set of admissible directions, and we describe sets of lines for which this is possible.
Characterizing the kernel of the null geodesic ray transform on tori reduces to solvability of certain Diophantine systems.
\end{abstract}

\keywords{Ray transforms, null geodesics, pseudo-Riemannian manifolds, inverse problems}

\subjclass[2010]{44A12, 53C50, 11D09}

\maketitle

\section{Introduction}

We study the problem of recovering a function on a pseudo-Riemannian manifold from its integrals over all null geodesics.
We do this in three different settings:
\begin{itemize}
\item
Pseudo-Riemannian products of compact Riemannian manifolds.
The signature $(n_1,n_2)$ is assumed to satisfy $n_1\geq2$ and $n_2\geq2$.
See section~\ref{sec:product} for details.
\item
The Minkowski space~$\R^{n_1,n_2}$ of any signature, $n_1\geq1$ and $n_2\geq2$.
If $n_1=1$ and $n_2\geq2$ (or vice versa), injectivity depends on whether we assume compact support or not.
See section~\ref{sec:minkowski} for details.
\item
The torus $\T^{n_1,n_2}=\R^{n_1,n_2}/\Z^{n_1+n_2}$ with the pseudo-Riemannian metric inherited from the Minkowski space.
Again we assume only $n_1\geq1$ and $n_2\geq2$.
The manifold has no boundary, and we only include integrals over closed null geodesics.
See section~\ref{sec:torus} for details.
\end{itemize}

A Riemannian metric on a manifold~$M$ is a vector bundle isomorphism $g\colon TM\to T^*M$ for which $g(v,v)\coloneqq\ip{gv}{v}>0$ for all $v\in TM\setminus0$ and the quadratic form corresponding to~$g$ is symemtric on each fiber~$T_xM$.
If we drop the positivity condition but insist that~$g$ be symmetric an isomorphism, we obtain a pseudo-Riemannian metric.
Each tangent space~$T_xM$ can be written as a direct sum of two subspaces so that the quadratic form given by the metric is positive definite on one and negative definite on the other.
If the dimensions of these subspaces are~$n_1$ and~$n_2$, respectively, the signature of the metric is $(n_1,n_2)$.
The numbers~$n_1$ and~$n_2$ are assumed constant, and they are automatically constant on connected manifolds.
Disconnected manifolds are not interesting for ray transforms.

If $n_2\geq1$, the pseudo-Riemannian metric tensor~$g$ does not give rise to a a distance function.
Geodesics can be defined, however, in the usual way using the Levi-Civita connection, but geodesics no longer minimize a convex functional.
The tangent vectors $v\in TM$ satisfying $g(v,v)=0$ are called null or lightlike vectors.
Geodesics with null tangent vectors are called null geodesics.

A simple example of a pseudo-Riemannian manifold of signature $(n_1,n_2)$ is the Euclidean space~$\R^{n_1+n_2}$ with the metric tensor given by the constant (diagonal) matrix
\begin{equation}
\begin{pmatrix}
I_{n_1}&0\\
0&-I_{n_2}
\end{pmatrix}
.
\end{equation}
We call this space the Minkowski space of signature $(n_1,n_2)$ and denote it by~$\R^{n_1,n_2}$.

In some settings we allow $n_1=1$ or $n_2=1$, but the results are weaker.
Riemannian and Euclidean geometry correspond to $n_2=0$, and the case $n_2=1$ (with $n_1\geq1$) is referred to as Lorentzian geometry.
If $n_1\geq2$ and $n_2\geq2$, all the ray transforms studied here are injective.
There is an unexpected gap: Assuming $n_1\geq2$, the cases $n_2=0$ and $n_2\geq2$ give relatively straightforward injectivity results, whereas the case $n_2=1$ we have some examples of non-injectivity.
We believe this is related to the fact that the (co)normal bundle of all non-zero null vectors is the whole (co)tangent bundle if and only if neither of~$n_1$ and~$n_2$ is~$1$.
See section~\ref{sec:minkowski} for more details on the normal bundle.

To make the analogy between $n_2=0$ and $n_2\geq2$ completely honest, one should restrict to null geodesics in both cases.
But $n_2=0$ corresponds to Riemannian geometry where no non-constant geodesic is null.
Therefore the mentioned gap is not entirely real.

Null geodesics have the peculiar property that a conformal change in the metric only causes a reparametrization --- as unparametrized curves they are conformally invariant.
Knowing the integrals of a function~$f$ over null geodesics of a pseudo-Riemannian metric~$cg$, with~$c$ a positive smooth function, is equivalent with knowing the integrals of~$\sqrt{c}f$ with respect to the metric~$g$.
Therefore injectivity results are invariant under conformal transformations.

\subsection{Methods and results}

We assume the signature to satisfy $n_1\geq1$ and $n_2\geq1$ unless otherwise mentioned.
For more precise statements of the results, see the theorems and corollaries as referred here.

Theorem~\ref{thm:product} states that the null geodesic X-ray transform is injective on a conformal pseudo-Riemannian product of two Riemannian manifolds~$M_1$ and~$M_2$, assuming that both manifolds are compact and have strictly convex boundary and non-positive sectional curvature.
Besides being injective on scalar functions, the ray transform is solenoidally injective on one-forms.
This means that a smooth one-form integrates to zero over all null geodesics if and only if it is the differential of a scalar function vanishing at the boundary.
The product manifold $M=M_1\times M_2$ does not have smooth boundary.
For technical convenience, we assume the functions and one-forms to vanish near the non-smooth parts of the boundary.

The proof is based on a Pestov identity, analogous to the ones used previously on Riemannian manifolds of any dimension~\cite{PSU:anosov-mfld}.

Theorem~\ref{thm:x-ray-schwartz} concerns the problem of reconstructing a Schwartz function $f\colon\R^n\to\C$ from its integrals over all lines that have direction in a given set $D\subset S^{n-1}$.
This reconstruction is possible if and only if the normal bundle of~$D$ is dense in~$\R^n$.
If~$D$ is closed, this amounts to requiring that every vector in~$\R^n$ is normal to some direction in~$D$.
If reconstruction is impossible, we characterize the kernel of the corresponding integral transform.
The proof relies on basic Fourier analysis.

The result changes dramatically if one replaces Schwartz functions with smooth functions of compact support.
Corollary~\ref{cor:x-ray-compact} states that for injectivity it suffices that~$D$ contains an arc.

The null X-ray transform on Minkowski spaces can be studied by taking~$D$ to be the unit vectors of the light cone.
Corollary~\ref{cor:minkowski} states that the null X-ray transform on the Minkowski space~$\R^{n_1,n_2}$ is injective on Schwartz functions if and only if $n_1\geq2$ and $n_2\geq2$.
On compactly supported smooth functions we have injectivity if and only if $n_1\geq2$ or $n_2\geq2$.

Theorem~\ref{thm:torus} states that a function or a distribution on~$\T^{n_1,n_2}$ can be recovered from its integrals over all closed null geodesics if and only if $n_1\geq2$ and $n_2\geq2$.
If these conditions are not met, theorem~\ref{thm:torus-ker} characterizes the kernel.
These results are based on Fourier analytic tools for X-ray transforms on tori developed in~\cite{I:torus}.
In addition, characterization of the kernel in the cases where there is one ($n_1=1$ or $n_2=1$) requires a study of solvabilty of Diophantine systems.
We solve the Diophantine problem in signature $(n,1)$ with $n\in\{1,2,3\}$, but in higher dimensions we do not reach an equally elegant characterization of the kernel of the periodic null X-ray transform.
The Diophantine results are presented in section~\ref{sec:diophantos}.

We give three completely independent proofs of the injectivity of the null X-ray transform on Minkowski the space~$\R^{n_1,n_2}$ when $n_1,n_2\geq2$.
These proofs follow simply from each one of theorem~\ref{thm:product}, corollary~\ref{cor:minkowski} and theorem~\ref{thm:torus}.

\subsection{Related results}

There is a vast literature on the Riemannian version of this problem.
For recent results in recovering a function or a tensor field from its integrals along geodesics, we refer to the survey~\cite{PSU:tensor-survey}.

The problem of recovering a function $f\colon\R^n\to\R$ from its integrals over lines with a given set~$D$ of possible directions is of practical interest in X-ray imaging.
If a cone of directions is given, the problem is known as the limited angle tomography problem.
For an in-depth overview of this topic, we refer to~\cite{F:limited-angle-thesis}.
Some other results concerning the recovery of a function in~$\R^3$ from certain restricted sets of lines can be found in~\cite{BQ:line-complex,B:partial-data-xrt}.

Our interest is not in stability or practical reconstruction methods, but mere uniqueness results.
In particular, we make no assumptions whatsoever on the set~$D$.
We are not aware of previous results for arbitrary sets of admissible directions.
Our interest in such generalized limited angle tomography problems comes from null geodesics in Minkowski spaces.
We have included the result for general~$D$ as it is of little added effort.

Null X-ray transforms have received some attention in Lorentzian geometry~\cite{LOSU:cosmic-strings,S:light-ray-support,S:scattering89,G:21cosmology}, but we are not aware of any previous work for other pseudo-Riemannian manifolds.
The Lorentzian null ray transform appears in connection to hyperbolic inverse boundary value problems~\cite{S:scattering89,RS:wave-eq,RR:hyperbolic,B:wave-eq,S:time-dep-potential,W:t-dep-potential,M:hyperbolic-dn,A:time-dep-coeff,KO:time-dep-coeff,SY:lorentz-dn}, similarly to how the Riemannian X-ray transform appears with elliptic ones. 
The transform has also applications in cosmology~\cite{LOSU:cosmic-strings,G:21cosmology}.

The Riemannian X-ray transform has proven useful in the study of inverse boundary value problems for elliptic equations, and the Lorentzian one for hyperbolic ones.
Similarly, we expect the null X-ray transform in pseudo-Riemannian geometry to be related to partial differential operators like the pseudo-Riemannian Laplace--Beltrami operator.
On a pseudo-Riemannian product of two manifolds $M_1$ and $M_2$ (see below), this operator is simply $\Delta_1-\Delta_2$, the difference of the Riemannian Laplace--Beltrami operators.

\section{The X-ray transform on pseudo-Riemannian products of Riemannian manifolds}
\label{sec:product}

Let $(M_i,g_i)$, $i=1,2$, be two compact Riemannian manifolds with boundary.
Let $M=M_1\times M_2$ and equip this product with the pseudo-Riemannian metric $g_1-g_2$; with plus sign this would be the standard Riemannian metric on the product.
The manifold~$M$ is Lorentzian if one of the dimensions is one.
We will also consider conformal multiples of this metric.
However, whenever we integrate over~$M$, we use the volume measure of the Riemannian metric $g_1+g_2$.
The product~$M$ is a manifold with corners.

We denote the ``edges'' of~$M$ by $E\coloneqq M_1\times\partial M_2\cup\partial M_1\times M_2$.
This is the irregular part of the boundary of~$M$.
We say that a function or a one-form is supported outside the edges if it vanishes in a neighborhood of~$E$.

Null geodesics on~$M$ are products of geodesics on~$M_1$ and~$M_2$ with equal speed.
Therefore the null geodesic flow lives on the light cone bundle
\begin{equation}
LM=SM_1\times SM_2\subset TM,
\end{equation}
where~$SM_i$ is the unit sphere bundle of~$M_i$.
Each fiber~$L_xM$, $x=(x_1,x_2)\in M$, is a product of two spheres: $L_xM=S_{x_1}M\times S_{x_2}M$.

\begin{theorem}
\label{thm:product}
Suppose the compact Riemannian manifolds $(M_1,g_1)$ and $(M_2,g_2)$ both have dimension two or higher, nonpositive sectional curvature and strictly convex boundary.
Equip $M=M_1\times M_2$ with the pseudo-Riemannian metric $c(x)(g_1-g_2)$, where $c\in C^\infty(M)$ is a positive conformal factor.
\begin{enumerate}
\item
If a smooth function $f\colon M\to\R$ supported outside the edges integrates to zero over all null geodesics, then $f=0$.
\item
If a smooth one-form~$\alpha$ supported outside the edges on~$M$ integrates to zero over all null geodesics, then $\alpha=\der h$ for some $h\in C^\infty(M)$ vanishing at the boundary.
\end{enumerate}
\end{theorem}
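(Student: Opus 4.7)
By the conformal invariance of null geodesics recalled in the introduction, it suffices to treat $c\equiv 1$, so that the metric is $g=g_1-g_2$. Its null geodesics are precisely the pairs $(\gamma_1,\gamma_2)$ of unit-speed geodesics in $M_1$ and $M_2$, and the null geodesic flow is the vector field $X=X_1+X_2$ on the light cone bundle $LM=SM_1\times SM_2$, with $X_i$ the geodesic vector field on $SM_i$. Strict convexity of each $\partial M_i$ makes both Riemannian flows non-trapping, hence the null flow on $LM$ is non-trapping as well, and we can turn the vanishing integral condition into a transport equation.

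Assume $I_N f=0$ and let $u\colon LM\to\R$ be obtained by integrating $f\circ\pi$ along the null flow from the outgoing boundary, so that $Xu=-f\circ\pi$ and $u$ vanishes on $\partial_+LM$; here $\pi\colon LM\to M$ is the base projection. Let $V_i$ denote the vertical gradient on $SM_i$, lifted to $LM$. Since $V_i$ differentiates only in the $v_i$-fiber while $X_j$ for $j\neq i$ acts only on the $(x_j,v_j)$ variables, we have $[V_i,X]=[V_i,X_i]$, which is the usual horizontal gradient on $SM_i$ lifted to $LM$. The classical Pestov identity on $SM_i$ therefore extends essentially verbatim to $LM$ and yields, for each $i\in\{1,2\}$,
\begin{equation*}
\|V_iXu\|^2=\|XV_iu\|^2-\ip{R_iV_iu}{V_iu}+(n_i-1)\|Xu\|^2+B_i,
\end{equation*}
where $R_i$ is the curvature operator of $M_i$, $B_i$ is a boundary integral over $\partial LM$, and all norms and inner products are in $L^2(LM)$.

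Under our hypotheses every term on the right-hand side is non-negative: nonpositive sectional curvature of $M_i$ gives $-\ip{R_iV_iu}{V_iu}\geq 0$, strict convexity of $\partial M_i$ makes $B_i\geq 0$ (the support assumption near the edges $E$ is what ensures that only the smooth part of $\partial M$ contributes), and $\|XV_iu\|^2$ and $(n_i-1)\|Xu\|^2$ are trivially non-negative. On the other hand $Xu=-f\circ\pi$ is the pullback of a function from $M$, so its vertical derivatives $V_iXu$ vanish and the left-hand side is zero. The hypothesis $n_i\geq 2$ therefore forces $\|Xu\|^2=0$, whence $f=0$.

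For the one-form statement one replaces $f\circ\pi$ by the fiberwise-linear function $a(x,v_1,v_2)=\alpha_x(v_1,v_2)$ on $LM$ and solves $Xu=-a$. A vertical Fourier decomposition of $u$ into bi-spherical-harmonic modes on $S_{x_1}M_1\times S_{x_2}M_2$, combined with the Pestov identity applied mode by mode, shows that $u$ is fiberwise affine in $(v_1,v_2)$; writing $u=-h\circ\pi-\beta$ with $h\in C^\infty(M)$ and $\beta$ identified with the fiber-linear lift of a one-form on $M$, the degree-one part of $Xu=-a$ yields $a=dh$ and the degree-two part, again under non-positive curvature and strict convexity, forces $\beta=0$, while the boundary condition on $u$ gives $h|_{\partial M}=0$. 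The main technical obstacle is establishing the Pestov identity on the product geometry (keeping track of the commutator algebra for $X=X_1+X_2$ and verifying the correct sign of the boundary terms) together with the non-smooth edges $E\subset\partial M$, where the transport solution $u$ may fail to be smooth --- this is exactly why the support hypothesis away from $E$ is imposed.
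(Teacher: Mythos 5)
Your overall strategy (reduce to $c\equiv1$, integrate along the non-trapping null flow to get $Xu=-f$, apply a Pestov identity, and use curvature and convexity for signs) matches the paper's, but there is a genuine gap at the central step: the per-factor Pestov identity you write down is not correct. Because the vertical operators on the $i$-th factor interact with the flow only through the structure equation $\hdiv_i\vgrad_i-\vdiv_i\hgrad_i=(n_i-1)X_i$, the commutator computation for $X=X_1+X_2$ produces the term $-(n_i-1)\iip{X_iXu}{u}=(n_i-1)\iip{Xu}{X_iu}$, \emph{not} $(n_i-1)\aabs{Xu}^2$. This cross term is not sign-definite, so neither single-factor identity alone yields $\aabs{Xu}=0$; the claim that the classical identity extends ``essentially verbatim'' skips exactly the point where the product structure matters. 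The paper's fix (Lemma~\ref{lma:pestov-sum}) is to take the weighted combination $(n_2-1)\times(\text{identity for }i=1)+(n_1-1)\times(\text{identity for }i=2)$, so that the two cross terms recombine into $(n_1-1)(n_2-1)\iip{Xu}{(X_1+X_2)u}=(n_1-1)(n_2-1)\aabs{Xu}^2$. This is also precisely where the hypothesis $n_1,n_2\geq2$ enters: if one factor were one-dimensional, the weight in front of the other identity would vanish and no $\aabs{Xu}^2$ term would survive. (The boundary terms $B_i$ you carry are unnecessary here: $u$ vanishes on all of $\partial(LM)$ because $f$ integrates to zero over every null geodesic, and the support condition away from the edges keeps $u$ smooth.)

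The one-form case also needs more than you give. With $Xu=-f$ and $f$ fiberwise linear, the left-hand side $(n_2-1)\aabs{\vgrad_1Xu}^2+(n_1-1)\aabs{\vgrad_2Xu}^2$ no longer vanishes; the paper cancels it against the $(n_1-1)(n_2-1)\aabs{Xu}^2$ term using the exact identity $(n_2-1)\aabs{\vgrad_1f}^2+(n_1-1)\aabs{\vgrad_2f}^2=(n_1-1)(n_2-1)\aabs{f}^2$ of Lemma~\ref{lma:1-form}, proved by explicit fiberwise integration over $S_{x_1}M_1\times S_{x_2}M_2$. What remains then forces $X\vgrad_iu=0$, hence $\vgrad_iu$ is parallel along geodesics and vanishes by the boundary condition; since the fibers of $LM$ are connected (again using $n_i\geq2$), $u$ is fiberwise \emph{constant}, $u=-h\circ\pi$, and $Xu=-f$ gives $\alpha=\der h$ with $h|_{\partial M}=0$. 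Your sketch instead asserts without justification that a mode-by-mode spherical-harmonics argument makes $u$ fiberwise affine and that a separate ``degree-two'' step kills the linear part $\beta$; as written this is not a proof, and the affine ansatz is weaker than what the identity actually delivers.
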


If we allowed the supports of~$f$ and~$\alpha$ meet~$E$, then the function~$u$ defined in the proof below would not necessarily be smooth.

\subsection{Vector fields and commutators}

The geodesic flow on the Riemannian unit sphere bundle~$SM_i$ is generated by the geodesic vector field~$X_i$.
We can naturally promote vector fields on~$SM_i$ to vector fields on $LM=SM_1\times SM_2$.

The geodesic vector field on~$LM$ is $X=X_1+X_2$.
This vector field generates the null geodesic flow.

In addition to the geodesic vector fields~$X_i$ we need vertical and horizontal gradients and divergences ($\hgrad_i,\hdiv_i,\vgrad_i,\vdiv_i$).
For an introduction to these, see~\cite[Section~2]{PSU:anosov-mfld}.
The situation is technically simpler in dimension two (in our setting $n_1=n_2=2$); see eg.~\cite{PSU:tensor-surface}.

In addition to these differential operators, we will use the curvature operator~$R_i$ on each~$SM_i$.
If~$M_i$ has non-positive sectional curvature, then $\ip{R_i(x,v)w}{w}\leq0$ for all $(x,v)\in SM_i$ and $w\in T_xM_i$ with $\ip{v}{w}=0$.

These operators satisfy the following commutator formulas:
\begin{equation}
\label{eq:commutators}
\begin{split}
[X_i,\vgrad_i]&=-\hgrad_i\\
[X_i,\hgrad_i]&=R_i\vgrad_i\\
\hdiv_i\vgrad_i-\vdiv_i\hgrad_i&=(n_i-1)X_i\\
[X_i,\vdiv_i]&=-\hdiv_i\\
[X_i,\hdiv_i]&=\vdiv_iR_i\\
\end{split}
\end{equation}
We have denoted $n_i=\dim M_i$.

All differential and curvature operators on~$M_1$ commute with those on~$M_2$ because of the product structure.

On manifolds of dimension one, we can let $\vdiv=\vgrad=\hdiv=\hgrad=0$; this is consistent with all commutator formulas.

\subsection{Pestov identities}

Using the commutator identities given above, we find Pestov identities related to the null X-ray transform.
These are analogous to the Riemannian identities given in~\cite[Section~2]{PSU:anosov-mfld}.

The norms~$\aabs{\cdot}$ below are norms on~$L^2(LM)$, and~$\iip{\cdot}{\cdot}$ is the corresponding inner product.
The norm~$\abs{\cdot}$ may refer to the usual norm on~$T_xM_i$ or~$\R^{n_i}$.

\begin{lemma}
\label{lma:pestov-i}
If $u\colon LM\to\R$ is smooth and vanishes at the boundary, then
\begin{equation}
\aabs{\vgrad_iXu}^2
=
\aabs{X\vgrad_iu}^2
-\iip{R_i\vgrad_iu}{\vgrad_iu}
-(n_i-1)\iip{X_iXu}{u}
\end{equation}
for $i=1,2$.
\end{lemma}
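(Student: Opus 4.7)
The plan is to imitate the classical Riemannian derivation of the Pestov identity, with the full geodesic vector field $X = X_1 + X_2$ playing the role that $X_i$ plays in a single factor. The product structure makes this possible because operators carrying the index $j \neq i$ commute with those carrying the index $i$, and consequently
\[
[X,\vgrad_i] = [X_i,\vgrad_i] = -\hgrad_i, \qquad [X,\hgrad_i] = [X_i,\hgrad_i] = R_i \vgrad_i.
\]

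Applying the first commutator to $u$ gives $\vgrad_i X u = X \vgrad_i u + \hgrad_i u$, and squaring the $L^2$ norm yields
\[
\aabs{\vgrad_i X u}^2 = \aabs{X\vgrad_i u}^2 + 2\iip{X\vgrad_i u}{\hgrad_i u} + \aabs{\hgrad_i u}^2.
\]
The remaining work is to reshape the cross term. Because $u$ vanishes near $\partial LM$ and the Liouville measure on $LM$ factors as a product of the Liouville measures on $SM_1$ and $SM_2$, the adjoint relations $X^* = -X$, $\vgrad_i^* = -\vdiv_i$ and $\hgrad_i^* = -\hdiv_i$ carry over from each $SM_i$ to $LM$ by Fubini with no boundary contribution. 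Using $X^* = -X$ together with $[X,\hgrad_i] = R_i \vgrad_i$ and the symmetry of $R_i$, the cross term becomes
\[
\iip{X \vgrad_i u}{\hgrad_i u} = -\iip{\vgrad_i u}{\hgrad_i X u} - \iip{R_i\vgrad_i u}{\vgrad_i u}.
\]
Next I would apply $\vgrad_i^* = -\vdiv_i$, the commutator $\vdiv_i \hgrad_i = \hdiv_i \vgrad_i - (n_i-1)X_i$, and then $\hdiv_i^* = -\hgrad_i$; this expresses the first term as
\[
-\iip{\vgrad_i u}{\hgrad_i X u} = -\iip{\vgrad_i X u}{\hgrad_i u} - (n_i-1)\iip{X_i X u}{u}.
\]
Replacing $\vgrad_i X u$ once more by $X\vgrad_i u + \hgrad_i u$ produces an identity that is self-referential in $\iip{X\vgrad_i u}{\hgrad_i u}$ and solves to
\[
2\iip{X\vgrad_i u}{\hgrad_i u} = -\aabs{\hgrad_i u}^2 - (n_i-1)\iip{X_i X u}{u} - \iip{R_i \vgrad_i u}{\vgrad_i u}.
\]
Substituting this back cancels the $\aabs{\hgrad_i u}^2$ term and yields exactly the claimed identity.

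The one point that needs careful attention is the integration by parts on $LM$, since $LM$ is a manifold with corners. The hypothesis that $u$ vanishes in a neighborhood of the whole boundary of $LM$ kills every boundary term, and the product structure of the Liouville measure reduces every adjoint relation used to the corresponding standard one on a single Riemannian unit sphere bundle as in~\cite[Section~2]{PSU:anosov-mfld}. Once this is granted, the entire argument is algebraic and formally identical to the single-factor Riemannian Pestov identity.
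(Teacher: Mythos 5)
Your proof is correct and is essentially the paper's argument in different packaging: the paper integrates by parts first and then simplifies the fourth-order operator $X\vdiv_1\vgrad_1X-\vdiv_1XX\vgrad_1$ by expanding $X=X_1+X_2$ and tracking the cross terms explicitly, whereas you absorb that bookkeeping into the observation that $[X,\vgrad_i]=[X_i,\vgrad_i]$ and $[X,\hgrad_i]=[X_i,\hgrad_i]$ by the product structure and then run the standard expand-the-square derivation. Both rest on exactly the same commutator formulas~\eqref{eq:commutators} and the same boundary-term-free integrations by parts, so this is the same proof in substance.
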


\begin{proof}
Suppose $i=1$; the other proof is identical.
Integrating by parts yields
\begin{equation}
\aabs{\vgrad_1Xu}^2-\aabs{X\vgrad_1u}^2
=
\iip{(X\vdiv_1\vgrad_1X-\vdiv_1XX\vgrad_1)u}{u}.
\end{equation}
Using $X=X_1+X_2$ and commutator formulas, we obtain
\begin{equation}
\begin{split}
&
X\vdiv_1\vgrad_1X-\vdiv_1XX\vgrad_1
\\
&=
(X_1\vdiv_1\vgrad_1X_1-\vdiv_1X_1X_1\vgrad_1)
\\&\quad
+(X_1\vdiv_1\vgrad_1X_2+X_2\vdiv_1\vgrad_1X_1
\\&\qquad-\vdiv_1X_1X_2\vgrad_1-\vdiv_1X_2X_1\vgrad_1)
\\&\quad
+(X_2\vdiv_1\vgrad_1X_2-\vdiv_1X_2X_2\vgrad_1)
\\&=
-(n_1-1)X_1^2+\vdiv_1R_1\vgrad_1
\\&\quad
+X_2([X_1,\vdiv_1]\vgrad_1-\vdiv_1[X_1,\vgrad_1])
\\&\quad
+0
\\&=
-(n_1-1)X_1^2+\vdiv_1R_1\vgrad_1-(n_1-1)X_2X_1.
\end{split}
\end{equation}
Integration by parts in the second term gives the claim.
\end{proof}

\begin{lemma}
\label{lma:pestov-sum}
If $u\colon LM\to\R$ is smooth and vanishes at the boundary, then
\begin{equation}
\begin{split}
&
(n_2-1)\aabs{\vgrad_1Xu}^2
+(n_1-1)\aabs{\vgrad_2Xu}^2
\\&=
(n_2-1)\aabs{X\vgrad_1u}^2
+(n_1-1)\aabs{X\vgrad_2u}^2
\\&\quad
-(n_2-1)\iip{R_1\vgrad_1u}{\vgrad_1u}
-(n_1-1)\iip{R_2\vgrad_2u}{\vgrad_2u}
\\&\quad
+(n_1-1)(n_2-1)\aabs{Xu}^2.
\end{split}
\end{equation}
\end{lemma}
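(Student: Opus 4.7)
The plan is to derive the identity as a straightforward linear combination of the two identities in Lemma~\ref{lma:pestov-i}. First I would write down the identity of Lemma~\ref{lma:pestov-i} for $i=1$, multiply it by $(n_2-1)$, do the same for $i=2$ with factor $(n_1-1)$, and add the two resulting equations. This immediately produces the first two kinetic terms and the two curvature terms appearing on the right-hand side of the claimed identity, so the only nontrivial work is to handle the remaining contribution
\begin{equation*}
-(n_2-1)(n_1-1)\iip{X_1Xu}{u}-(n_1-1)(n_2-1)\iip{X_2Xu}{u}.
\end{equation*}

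Next, I would use the product structure $X=X_1+X_2$ to collapse this expression into
\begin{equation*}
-(n_1-1)(n_2-1)\iip{XXu}{u}=-(n_1-1)(n_2-1)\iip{X^2u}{u}.
\end{equation*}
The geodesic vector fields $X_i$ are skew-adjoint on $L^2(SM_i)$ with respect to the Liouville measure (since the geodesic flow preserves it), and therefore $X=X_1+X_2$ is skew-adjoint on $L^2(LM)$ with respect to the product Liouville measure, provided one integrates by parts against the vanishing-at-boundary condition on $u$. A single integration by parts then yields $-\iip{X^2u}{u}=\aabs{Xu}^2$, which after multiplying by $(n_1-1)(n_2-1)$ gives precisely the last term in the statement.

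The main obstacle, modest as it is, lies in justifying this integration by parts on the product $LM=SM_1\times SM_2$. Since $u$ vanishes on $\partial LM$, no boundary terms can appear from integrating $X_1$ or $X_2$ by parts on the respective factors; the product Liouville measure is invariant under both $X_1$ and $X_2$ individually, hence under the null geodesic flow generated by $X$. Provided this invariance and the skew-adjointness of each $X_i$ are taken from the Riemannian theory (as in~\cite{PSU:anosov-mfld}), the combination of the two Pestov identities and this single integration by parts yields exactly the statement of Lemma~\ref{lma:pestov-sum}, so the proof reduces to bookkeeping.
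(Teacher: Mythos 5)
Your proposal is correct and follows exactly the route the paper takes: the paper's proof of Lemma~\ref{lma:pestov-sum} is the one-line observation that it follows from Lemma~\ref{lma:pestov-i} applied to $i=1,2$, with the weights $(n_2-1)$ and $(n_1-1)$ and the final integration by parts $-\iip{X^2u}{u}=\aabs{Xu}^2$ (using $X=X_1+X_2$ and the vanishing of $u$ on $\partial(LM)$) left implicit. Your bookkeeping of the cross term and the justification of skew-adjointness of $X$ are exactly the details the paper suppresses.
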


\begin{proof}
The result follows from lemma~\ref{lma:pestov-i}, applied to both $i=1,2$.
\end{proof}

Formally putting $n_2=0$ and $\vgrad_2=0$ in lemma~\ref{lma:pestov-sum} reproduces the Riemannian Pestov identity~\cite[Propostion~2.2]{PSU:anosov-mfld}.

In addition to functions, we will consider one-forms.
We can turn a one-form~$\alpha$ on~$M$ into a function~$f$ on~$LM$ via $f(x,v)=\alpha(x)(v)$.
The one-form is a sum of two one-forms, $\alpha=\alpha_1+\alpha_2$, where $\alpha_i$ is a one-form on~$M_i$.
We remark that~$\alpha_1$ depends on both~$x_1$ and~$x_2$ but only acts on vector fields on~$M_1$; and vice versa for~$\alpha_2$.
To prove injectivity for one-forms, we need to supplement lemma~\ref{lma:pestov-sum} with the following identity.

\begin{lemma}
\label{lma:1-form}
If~$f$ is a smooth function on~$LM$ arising from a one-form, then
\begin{equation}
\label{eq:lma-1-form}
(n_2-1)\aabs{\vgrad_1f}^2
+(n_1-1)\aabs{\vgrad_2f}^2
=
(n_1-1)(n_2-1)\aabs{f}^2.
\end{equation}
\end{lemma}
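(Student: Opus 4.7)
The plan is to reduce the identity to a computation on the product of sphere fibres by decomposing $f$ according to the product structure of the metric, and then to apply a single pointwise fact about vertical gradients of functions arising from one-forms.

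First I would write $f = f_1 + f_2$, where $f_i(x,v) = \alpha_i(x)(v_i)$ for $v = (v_1,v_2) \in L_xM = S_{x_1}M_1 \times S_{x_2}M_2$. Because $\alpha_i$ only pairs with the $i$th velocity component, $f_2$ is constant along the fibres of $S_{x_1}M_1$ and $f_1$ is constant along the fibres of $S_{x_2}M_2$. Hence $\vgrad_1 f = \vgrad_1 f_1$ and $\vgrad_2 f = \vgrad_2 f_2$, and the left-hand side of \eqref{eq:lma-1-form} only involves one summand at a time.

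Next I would establish the pointwise identity: for a one-form $\beta$ on a Riemannian $n$-manifold $N$ and the function $h(v) = \beta(v)$ on the sphere $S_xN$, one has
\begin{equation*}
\abs{\vgrad h(v)}^2 = \abs{\beta}^2 - \beta(v)^2.
\end{equation*}
This is immediate from differentiating $h$ along the great circle $t \mapsto \cos(t)v + \sin(t)w$ with $w \perp v$, which shows that $\vgrad h(v)$ is the orthogonal projection of $\beta^{\sharp}$ onto $v^{\perp}$. Integrating over $S_xN$ and using the standard identity $\int_{S_xN}\beta(v)^2\,\der v = \frac{1}{n}\abs{\beta}^2 \operatorname{Vol}(S_xN)$ yields the clean consequence
\begin{equation*}
\int_{S_xN} \abs{\vgrad h}^2 \,\der v = (n-1)\int_{S_xN} h^2 \,\der v.
\end{equation*}

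Applying this on each factor gives $\int_{L_xM}\abs{\vgrad_i f}^2\,\der v = (n_i-1)\int_{L_xM} f_i^2 \,\der v$ for $i=1,2$. Since $\int_{S_{x_i}M_i}\alpha_i(v_i)\,\der v_i = 0$ by the antipodal symmetry, the cross term vanishes and $\int_{L_xM} f^2\,\der v = \int_{L_xM}(f_1^2+f_2^2)\,\der v$. Multiplying the two fibre identities by $n_2-1$ and $n_1-1$ respectively, adding them, and integrating over $M$ produces
\begin{equation*}
(n_2-1)\aabs{\vgrad_1 f}^2+(n_1-1)\aabs{\vgrad_2 f}^2 = (n_1-1)(n_2-1)\aabs{f}^2,
\end{equation*}
which is \eqref{eq:lma-1-form}. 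There is no real obstacle here: the whole argument is a careful bookkeeping of sphere integrals, and the only step requiring any thought is the pointwise computation of $\abs{\vgrad h}^2$ for $h$ linear in $v$, which is entirely standard.
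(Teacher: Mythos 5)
Your proof is correct and follows essentially the same route as the paper: the pointwise fact that $\vgrad_i f$ is the orthogonal projection of $\alpha_i^{\sharp}$ onto $v_i^{\perp}$, the spherical average $\int_{S^{n-1}}(a\cdot v)^2\,\der\Sigma(v)=\abs{a}^2\Sigma(S^{n-1})/n$, and the vanishing of the cross term by antipodal symmetry. The only cosmetic difference is that you package the fibre computation as $\int\abs{\vgrad h}^2\,\der v=(n-1)\int h^2\,\der v$ instead of carrying the sphere volumes $\omega_i$ explicitly as the paper does.
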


\begin{proof}
Let us denote the usual measures on the manifolds~$M_i$ and the fibers of the bundles~$SM_i$ by~$\der x_i$ and~$\der v_i$.

Now $\vgrad_if(x_1,x_2,v_1,v_2)$ is the component of $\alpha_i(x_1,x_2)$ orthogonal to~$v_i$, which implies
\begin{equation}
\label{eq:1-form-og}
\abs{\alpha_i}^2
=
\abs{f_i}^2+\abs{\vgrad_if_i}^2
\end{equation}
at every point of~$LM$.

Let~$\Sigma$ be the usual surface measure on the sphere $S^{n-1}\subset\R^n$.
By symmetry the integral $\int_{S^{n-1}}(v_j)^2\der\Sigma(v)$ is independent of the index~$j$, so
\begin{equation}
n\int_{S^{n-1}}(v_1)^2\der\Sigma(v)
=
\sum_{j=1}^n\int_{S^{n-1}}(v_j)^2\der\Sigma(v)
=
\Sigma(S^{n-1}).
\end{equation}
By symmetry and scaling properties, we get for any $a\in\R^n$ that
\begin{equation}
\label{eq:sphere-int}
\int_{S^{n-1}}(a\cdot v)^2\der\Sigma(v)
=
\frac{\abs{a}^2\Sigma(S^{n-1})}{n}.
\end{equation}
Let us denote by $\omega_i=\Sigma(S^{n-1})$ the measure of the unit sphere in~$\R^{n_i}$.

Using these results from the Euclidean space, let us calculate the fiberwise integrals of~$\abs{\vgrad_if_i}^2$ and~$\abs{f_i}^2$.
First, identity~\eqref{eq:sphere-int} gives
\begin{equation}
\label{eq:f_i^2-int}
\int_{S_{x_i}M_i}\abs{f_i(x,v)}^2\der v_i
=
\abs{\alpha_i(x)}^2\frac{\omega_i}{n_i}.
\end{equation}
With equation~\eqref{eq:1-form-og} this yields
\begin{equation}
\label{eq:vgrad-int-i}
\int_{S_{x_i}M_i}\abs{\vgrad_if_i(x,v)}^2\der v_i
=
\abs{\alpha_i(x)}^2\omega_i\frac{n_i-1}{n_i}.
\end{equation}
By symmetry, we have also
\begin{equation}
\label{eq:f_i-int}
\int_{S_{x_i}M_i}f_i(x,v)\der v_i
=
0.
\end{equation}
Combining equations~\eqref{eq:f_i^2-int} and~\eqref{eq:f_i-int} we get for any $x\in M$
\begin{equation}
\label{eq:f^2-int}
\begin{split}
\int_{L_xM_x}\abs{f}^2\der v_1\der v_2
&=
\int_{S_{x_1}M_1}\int_{S_{x_2}M_2}(f_1^2+f_2^2+2f_1f_2)\der v_1\der v_2
\\&=
\int_{S_{x_1}M_1}f_1^2\der v_1\cdot\int_{S_{x_2}M_2}\der v_2
\\&\quad+
\int_{S_{x_1}M_1}\der v_1\cdot\int_{S_{x_2}M_2}f_2^2\der v_2
\\&\quad+
2\int_{S_{x_1}M_1}f_1\der v_1\cdot\int_{S_{x_2}M_2}f_2\der v_2
\\&=
\abs{\alpha_1(x)}^2\frac{\omega_1}{n_1}\cdot\omega_2
+
\omega_1\cdot\abs{\alpha_2(x)}^2\frac{\omega_2}{n_2}
+
0
\\&=
\frac{\omega_1\omega_2}{n_1n_2}(n_2\abs{\alpha_1(x)}^2+n_1\abs{\alpha_2(x)}^2)
\end{split}
\end{equation}
and similarly~\eqref{eq:vgrad-int-i} gives
\begin{equation}
\label{eq:vgrad-int}
\int_{L_xM}\abs{\vgrad_if}^2\der v_1\der v_2
=
\omega_1\omega_2\abs{\alpha_i(x)}^2\frac{n_i-1}{n_i}.
\end{equation}
Integrating equations~\eqref{eq:f^2-int} and~\eqref{eq:vgrad-int} over~$M$ gives~\eqref{eq:lma-1-form}.
\end{proof}

\subsection{Proof of theorem~\ref{thm:product}}

As mentioned in the introduction, it suffices to prove the results for $c=1$ due to conformal invariance of null geodesics.

(1) 
%
%
Let us denote the flight time function $LM\to[0,\infty)$ by~$\tau$.
For $(x,v)\in LM$, let $\gamma_{x,v}\colon[0,\tau_{x,v}]\to M$ be the maximal null geodesic starting at the given point and direction.
We define $u\colon LM\to\R$ by
\begin{equation}
\label{eq:u-def}
u(x,v)
=
\int_0^{\tau_{x,v}}f(\gamma_{x,v}(t))\der t.
\end{equation}
The flight time~$\tau$ does not depend smoothly on $(x,v)\in LM$.
The function~$\tau$ is non-smooth at an interior point~$(x,v)$ of~$LM$ if and only if $\gamma_{x,v}(\tau_{x,v})\in E$.
This does not, however, make~$u$ non-smooth since the function~$f$ vanishes in a neighborhood of~$E$ and we may alter~$\tau$ slightly to make it smooth without changing~$u$.
This is due to the smoothness of the flight time functions $\tau_i\colon SM_i\to[0,\infty)$ on simple Riemannian manifolds.

The fundamental theorem of calculus along each geodesic gives $Xu=-f$.
Since~$f$ does not depend on direction, we have $\vgrad_iXu=0$ for $i=1,2$.
Also,~$u$ vanishes at the boundary of~$LM$ because~$f$ integrates to zero over all null geodesics.
Using lemma~\ref{lma:pestov-sum} for this~$u$ gives $\aabs{Xu}=0$, so $f=0$.
Notice that all terms in the Pestov identity of lemma~\ref{lma:pestov-sum} are non-negative.

(2)
We can identify the one-form~$\alpha$ with a function~$f$ on~$LM$ as discussed before.
We define $u\colon LM\to\R$ as in~\eqref{eq:u-def}, replacing~$f(\gamma_{x,v}(t))$ with $f(\gamma_{x,v}(t),\dot\gamma_{x,v}(t))$.
This function~$u$ is smooth and vanishes at~$\partial(LM)$.

We again apply lemma~\ref{lma:pestov-sum}, now combined with lemma~\ref{lma:1-form}.
We obtain
\begin{equation}
\begin{split}
0
&=
(n_2-1)\aabs{X\vgrad_1u}^2
+(n_1-1)\aabs{X\vgrad_2u}^2
\\&\quad
-(n_2-1)\iip{R_1\vgrad_1u}{\vgrad_1u}
-(n_1-1)\iip{R_2\vgrad_2u}{\vgrad_2u}.
\end{split}
\end{equation}
Since both curvatures were assumed non-positive and both dimensions at least two, this implies that $X\vgrad_iu=0$ for both~$i$.
This in turn means that the vector field~$\vgrad_iu$ is parallel transported along all geodesics, and the boundary values imply that $\vgrad_iu=0$.

Since both vertical gradients of~$u$ vanish and the fibers of~$LM$ are connected,~$u$ can be identified with (pulled back from) a smooth function $-h\colon M\to\R$.
Now $Xu=-f$ together with the identification of~$f$ and~$\alpha$ gives $\alpha=\der h$.
Since~$u$ vanishes at the boundary, so does~$h$.

\subsection{Remarks}

Theorem~\ref{thm:product} can also be regarded as an injectivity result for the X-ray transform on the Riemannian product manifold $M_1\times M_2$ with partial data.
This problem is not conformally invariant.

Assuming at least two dimensions for both space and time ($n_1\geq2$ and $n_2\geq2$) was crucial.
To obtain a useful Pestov identity, we needed to be able to differentiate with respect to the dimension of time -- this was done with~$\vgrad_2$.
In the case of one-forms we also needed the fibers of the light bundle~$LM$ to be connected, which is false if $n_1=1$ or $n_2=1$.


We do not know whether or not there is a useful Pestov identity for general pseudo-Riemannian manifolds, not only conformal pseudo-Riemannian products of Riemannian manifolds.
To achieve such an identity, we should preferably have a compact bundle on which the null geodesic flow lives.
In product geometry we may use the natural Riemannian product metric.
Viewed this way, the manifold in theorem~\ref{thm:product} has a Riemannian metric~$g$ and a pseudo-Riemannian metric~$h$, and the null geodesic flow lives on the bundle
\begin{equation}
LM
=
\{(x,v)\in TM;g(v,v)=1,h(v,v)=0\}.
\end{equation}
For a generic pseudo-Riemannian metric~$h$ we do not expect there to exist a Riemannian metric~$g$ so that the bundle above is is invariant under the null $h$-geodesic flow.

\section{X-ray tomography in Euclidean spaces with partial direction data}
\label{sec:minkowski}

For a set $D\subset S^{n-1}\subset\R^n$, we consider the problem of recovering a function $f\colon\R^n\to\C$ from the knowledge of its line integrals over all lines with direction in the set~$D$.
Our main motivation is in null ray transforms in a Minkowski space of any signature, but we present the method and results in greater generality.

We will consider two function spaces, Schwartz functions ($\schw(\R^n)$) and compactly supported smooth functions ($C_0^\infty(\R^n)$).
Notice that $C_0^\infty(\R^n)\subset\schw(\R^n)$.
The results are likely to hold with lower regularity, but we shall not pursue optimal regularity in this section.

For $f\in\schw(\R^n)$ and $v\in D$, we define the function $R_vf\colon\R^n\to\C$ by
\begin{equation}
R_vf(x)=\int_\R f(x+tv)\der t.
\end{equation}
This function is well-defined and smooth due to the regularity assumption on~$f$, but in general~$R_vf$ is not a Schwartz function.

We denote the normal bundle $N(D)\subset\R^n$ of~$D$ by
\begin{equation}
N(D)=\{x\in\R^n;x\cdot v=0\text{ for some }v\in D\}.
\end{equation}
It is well known~\cite{SU:x-ray-book} that in order to recover a function stably from its integrals over a family of curves, the normal bundle of the family should cover the whole tangent bundle of the space, be it a Euclidean space or a manifold.
The following result is analogous, but due to the structure of the problem at hand, the considerations are global rather than microlocal.

The following theorem is not new, but we record and prove it here for completeness.
At least some aspects of corollary~\ref{cor:x-ray-compact} are also known, but  corolorray~\ref{cor:x-ray-compact}(2b) is new to the best of our knowledge.
The condition that $N(D)=\R^n$ is known as the Orlov condition~\cite{O:orlov-condition}.
For uniqueness we do not need to assume that much.

\begin{theorem}
\label{thm:x-ray-schwartz}
Let $n\geq1$ and fix a set $D\subset S^{n-1}$.
The following are equivalent for a function $f\in\schw(\R^n)$:
\begin{enumerate}
\item
For every $v\in D$ the function~$R_vf$ vanishes identically.
(The function~$f$ integrates to zero over all lines with directions in the set~$D$.)
\item
The Fourier transform~$\F f$ of~$f$ vanishes in~$N(D)$.
\end{enumerate}
In particular, the X-ray transform with directions restricted to the set~$D$ is injective on~$\schw(\R^n)$ if and only if~$N(D)$ is dense in~$\R^n$.
\end{theorem}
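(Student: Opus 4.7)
The plan is to reduce everything to the Fourier slice (projection) theorem. For fixed $v \in S^{n-1}$ and $f \in \schw(\R^n)$, the function $R_v f$ is constant along the line $\R v$, so it descends to a function on the hyperplane $v^\perp$; moreover, because integration in one direction preserves Schwartz regularity, this descent lies in $\schw(v^\perp)$. Fubini's theorem then gives the identity
\begin{equation}
\F_{v^\perp}(R_v f|_{v^\perp})(\xi) = \F f(\xi) \qquad \text{for all } \xi \in v^\perp.
\end{equation}
By Fourier inversion on $v^\perp$, the vanishing of $R_v f$ on all of $\R^n$ is therefore equivalent to the vanishing of $\F f$ on the hyperplane $v^\perp$.

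Next I would take the union over $v \in D$. Since $N(D) = \bigcup_{v \in D} v^\perp$, the equivalence above immediately gives (1) $\iff$ (2), which is the main statement.

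For the "in particular" clause, the forward direction is almost immediate: if $N(D)$ is dense and (1) holds, then $\F f$ vanishes on a dense subset of $\R^n$, and by continuity of the Fourier transform on $\schw$ we conclude $\F f \equiv 0$, hence $f \equiv 0$. For the converse, I would construct a counterexample: if $N(D)$ is not dense, there exists an open ball $B \subset \R^n \setminus \overline{N(D)}$; pick any nonzero $\phi \in C_0^\infty(B)$ and set $f = \F^{-1}\phi$, which lies in $\schw(\R^n) \setminus \{0\}$ and satisfies $\F f = \phi \equiv 0$ on $N(D)$, so by the established equivalence it lies in the kernel of the restricted X-ray transform.

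The argument is essentially classical, so I do not expect a genuine obstacle; the only point requiring some care is the bookkeeping around $R_v f$ being a function on $\R^n$ versus its descent to $v^\perp$, and making sure the Fubini manipulation giving the slice identity is carried out in a class (Schwartz) where it is unambiguous. A subtler issue, should one wish to push to less regular $f$, would be that $R_v f$ need not itself be Schwartz on $\R^n$ even when its hyperplane restriction is; but within $\schw(\R^n)$, as the theorem assumes, none of this causes trouble.
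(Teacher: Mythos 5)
Your proposal is correct and follows essentially the same route as the paper: the Fourier slice identity $\F_{v^\perp}(R_vf|_{v^\perp})=\F f|_{v^\perp}$, the union over $v\in D$ to get the equivalence with vanishing on $N(D)$, continuity of $\F f$ for the density direction, and an inverse Fourier transform of a bump supported off $\overline{N(D)}$ for the converse. No gaps; the only difference is that you spell out the Schwartz-class bookkeeping for the descent of $R_vf$ to $v^\perp$ slightly more explicitly than the paper does.
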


\begin{proof}
Take any $v\in D$.
The function~$R_vf$ is invariant under translations in the direction~$v$, so we lose no information in restricting~$R_vf$ to the orthogonal complement $v^\perp\subset\R^n$.
We calculate the Fourier transform of~$R_vf|_{v^\perp}$ and find that
\begin{equation}
\begin{split}
\F_{v^\perp}R_v|_{v^\perp}f(k)
&=
\int_{v^\perp}e^{-ik\cdot x}\int_\R f(x+tv)\der t\der x
\\&=
\int_{\R^n}e^{-ik\cdot y}f(y)\der y
=
\F f(k)
\end{split}
\end{equation}
for all $k\in v^\perp$,
where restriction to the subspace has been indicated by a subscript in the Fourier transform.

Therefore the function~$R_vf$ determines the restriction of~$\F f$ to~$v^\perp$ but gives no other information.
We conclude that each~$R_vf$ vanishes identically for all $v\in D$ if and only if~$\F f$ vanishes on~$\bigcup_{v\in D}v^\perp$.
This union is exactly~$N(D)$.
This concludes the proof of the equivalence of the listed statements.

If~$N(D)$ is dense, then injectivity follows from the obtained result and continuity.
If~$N(D)$ is not dense, there is a non-trivial Schwartz function supported in its complement, and its inverse Fourier transform is in the kernel of the X-ray transform with directions in~$D$.
\end{proof}

If convenient, one may assume that~$D$ is closed and antipodally symmetric without any loss of generality.

In the case of compactly supported functions a smaller set~$D$ suffices for injectivity as the following result shows.
Unfortunately the result does not fully characterize the sets~$D$ for which the ray transform is injective.

\begin{corollary}
\label{cor:x-ray-compact}
Let $n\geq1$ and fix a set $D\subset S^{n-1}$.
Consider the X-ray transform on the space $C_0^\infty(\R^n)$ with directions in the set~$D$.
\begin{enumerate}
\item
If~$D$ is finite, the transform is not injective.
\item
If~$N(D)$ has an interior point, then the transform is injective.
In particular, either of the following suffices for injectivity:
\begin{enumerate}
\item
$D$ is nonempty and open.
\item
$D$ contains a connected set of at least two points.
\end{enumerate}
\end{enumerate}
\end{corollary}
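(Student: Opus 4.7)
My plan is to translate everything to the Fourier side via Theorem~\ref{thm:x-ray-schwartz} (applicable since $C_0^\infty(\R^n)\subset\schw(\R^n)$) and then exploit the analyticity of Fourier transforms of compactly supported functions via Paley--Wiener.

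For part~(1), write $D=\{v_1,\dots,v_m\}$ (the empty case being trivial). I would take any nontrivial $\phi\in C_0^\infty(\R^n)$ and construct the kernel element
\begin{equation*}
f=\partial_{v_1}\partial_{v_2}\cdots\partial_{v_m}\phi,
\end{equation*}
whose Fourier transform $\F f(k)=i^m\prod_{j=1}^m(v_j\cdot k)\,\F\phi(k)$ vanishes on every hyperplane $v_j^\perp$ and hence on $N(D)$. Theorem~\ref{thm:x-ray-schwartz} then puts $f$ in the kernel, and nontriviality follows because $\F\phi$ is entire and not identically zero while the polynomial factor is not identically zero.

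For part~(2), suppose $N(D)$ has an interior point. Any kernel element $f\in C_0^\infty(\R^n)$ has, by Theorem~\ref{thm:x-ray-schwartz}, Fourier transform vanishing on $N(D)$ and thus on some nonempty open ball in $\R^n$; but Paley--Wiener makes $\F f$ entire on $\C^n$, so the identity theorem forces $\F f\equiv 0$ and hence $f=0$. For the two sufficient conditions: (2a) reduces to~(2b) because any nonempty open subset of $S^{n-1}$ (with $n\geq 2$) contains a geodesic arc, which is a connected set with at least two points. For~(2b), pick distinct $v,w$ in a connected $C\subset D$ and set $x_0=v-w$; then $x_0\cdot v=1-v\cdot w>0$ and $x_0\cdot w=-(1-v\cdot w)<0$, and by continuity both strict inequalities persist for every $x$ in a small neighborhood $U$ of $x_0$. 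The intermediate value theorem applied to the continuous function $v'\mapsto x\cdot v'$ on the connected set $C$ then yields a zero, i.e.\ some $v'\in C\subset D$ with $x\cdot v'=0$, so $x\in N(D)$; thus $U\subset N(D)$.

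The most delicate point is the final neighborhood argument: one might initially hope that an intermediate value argument at the single point $x_0=v-w$ already suffices, but this only produces $x_0\in N(D)$, not an interior point. The key observation is that the two \emph{strict} sign conditions $x_0\cdot v>0>x_0\cdot w$ are stable under small perturbations of $x$, which upgrades the single point to a full open subset of $N(D)$ and feeds back into the Paley--Wiener argument.
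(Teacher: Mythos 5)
Your proof is correct and follows essentially the same route as the paper: reduce to the Fourier characterization of Theorem~\ref{thm:x-ray-schwartz}, use Paley--Wiener analyticity to upgrade vanishing on an open set to vanishing everywhere, and in (2b) use an intermediate value argument on the connected set of directions to produce an open subset of $N(D)$. The only (harmless) variations are that the paper builds the counterexample in (1) as a convolution $f_1*\cdots*f_M$ of directional derivatives of $\phi$ rather than the iterated derivative $\partial_{v_1}\cdots\partial_{v_m}\phi$ (the Fourier-side polynomial factor is the same), and in (2b) the paper works with the functions $\inf_{y\in\gamma}x\cdot y$ and $\sup_{y\in\gamma}x\cdot y$ on the sphere where you use the more direct choice $x_0=v-w$ with stability of the strict sign conditions.
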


\begin{proof}
(1)
Let $\phi\in C_0^\infty(\R^n)$ be any non-trivial function and write the finite set as $D=\{v_1,v_2,\dots,v_M\}$.
For every $m\in\{1,\dots,M\}$ let $f_m(x)=v_m\cdot\nabla\phi(x)$.
Define then $f\colon\R^n\to\C$ by
\begin{equation}
f
=
f_1*f_2*\cdots*f_M.
\end{equation}
As a convolution of non-trivial~$C_0^\infty$ functions~$f$ is a non-trivial~$C_0^\infty$ function.

The Fourier transform~$\F f$ is the product of the Fourier transforms of the functions $f_1,\dots,f_M$.
Since~$\F f_m$ vanishes on~$v_m^\perp$, we know that~$\F f$ vanishes on the union of all these orthogonal complements.
This union is the normal bundle~$N(D)$, so~$f$ is in the kernel of the X-ray transform by theorem~\ref{thm:x-ray-schwartz}.

(2)
Suppose $f\in C_0^\infty(\R^n)$ satisfies $R_vf=0$ for all $v\in D$.
By theorem~\ref{thm:x-ray-schwartz}~$\F f$ vanishes on~$N(D)$ and in particular on some open set.
The Fourier transform~$\F f$ is real-analytic due to the Paley--Wiener theorem, so~$\F f$ must vanish identically.
Thus $f=0$.

(2a)
If~$D$ is open, it is an elementary observation that~$N(D)$ is open.
This result is also a special case of the next condition since every open set contains an arc.

(2b)
We will show that the unit normal bundle $N_1(D)=N(D)\cap S^{n-1}$ has an interior point as a subset of~$S^{n-1}$.
It follows that~$N(D)$ has an interior point as well.

Let $\gamma\subset D$ be a connected set with at least two points.
Define functions $f_\gamma,g_\gamma\colon S^{n-1}\to[-1,1]$ by
\begin{equation}
\begin{split}
f_\gamma(x)&=\inf_{y\in\gamma} x\cdot y\quad\text{and}\\
g_\gamma(x)&=\sup_{y\in\gamma} x\cdot y.
\end{split}
\end{equation}
These two functions are continuous:
The continuity of~$g_\gamma$ follows from the continuity of the distance function and the property
\begin{equation}
g_\gamma(x)=\cos(d(x,\gamma)).
\end{equation}
If $-\gamma=\{-x;x\in\gamma\}$ denotes the antipodal reflection of~$\gamma$, then $f_\gamma=-g_{-\gamma}$, so also~$f_\gamma$ is continuous.

There exists a point $x_0\in S^{n-1}$ so that $f_\gamma(x_0)<0$ and $g_\gamma(x_0)>0$.
To see this, take two points $a,b\in\gamma$ which are distinct but not antipodal and observe that there is an~$x_0$ for which $a\cdot x_0<0<b\cdot x_0$.

Since the functions~$f_\gamma$ and~$g_\gamma$ are continuous,~$x_0$ has a neighborhood of points with this same property.
It thus suffices to show that $x_0\in N_1(D)$.

The function $\phi\colon S^{n-1}\to[-1,1]$, $\phi(v)=v\cdot x_0$ is continuous.
Therefore the image~$\phi(\gamma)$ is connected, and so $(f_\gamma(x_0),g_\gamma(x_0))\subset\phi(\gamma)$.
Since $f_\gamma(x_0)<0$ and $g_\gamma(x_0)>0$, we have $0\in\phi(\gamma)$.
This means that $x_0\in N_1(D)$ as claimed.
\end{proof}

We now present the conclusions of theorem~\ref{thm:x-ray-schwartz} and corollary~\ref{cor:x-ray-compact} in the context of Minkowski spaces.
Recall that the Minkowski space of signature $(n_1,n_2)$ is the space~$\R^{n_1+n_2}$ with the pseudo-Riemannian metric
\begin{equation}
\der x_1^2+\cdots+\der x_{n_1}^2-\der x_{n_1+1}^2-\cdots-\der x_{n_1+n_2}^2.
\end{equation}
We denote this space by~$\R^{n_1,n_2}$.

A vector $v\in\R^{n_1,n_2}$ in this space is null (or lightlike) if
\begin{equation}
v_1^2+\cdots+v_{n_1}^2-v_{n_1+1}^2-\cdots-v_{n_1+n_2}^2=0.
\end{equation}
The set of all null vectors is called the light cone and denoted here by~$L$.

The null X-ray transform on~$\R^{n_1,n_2}$ is the X-ray transform where integrals are only taken in null directions.
That is, the set of directions~$D$ consists of all unit vectors in the light cone~$L$.

\begin{corollary}
\label{cor:minkowski}
Injectivity of the null X-ray transform on~$\R^{n_1,n_2}$ can be characterized as follows:
\begin{enumerate}
\item
If $n_1=n_2=1$, the null X-ray transform is injective on neither~$\schw(\R^{n_1,n_2})$ nor~$C_0^\infty(\R^{n_1,n_2})$.
\item
If $n_1=1$ and $n_2>1$, the null X-ray transform is injective on~$C_0^\infty(\R^{n_1,n_2})$.
\item
If $n_1=1$ and $n_2>1$, the kernel of the null X-ray transform on~$\schw(\R^{n_1,n_2})$ consists of functions whose Fourier transform is supported in the timelike light cone
\begin{equation}
\{v\in\R^{n_1,n_2};v_1^2-v_{2}^2-\cdots-v_{n_1+n_2}^2\geq0\}.
\end{equation}
\item
If $n_1>1$ and $n_2>1$, the null X-ray transform is injective on both~$\schw(\R^{n_1,n_2})$ and~$C_0^\infty(\R^{n_1,n_2})$.
\end{enumerate}
\end{corollary}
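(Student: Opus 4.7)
The plan is to translate each case into a statement about the set $D\subset S^{n_1+n_2-1}$ of Euclidean-unit null directions in $\R^{n_1,n_2}$ and then apply Theorem~\ref{thm:x-ray-schwartz} and Corollary~\ref{cor:x-ray-compact}. The key starting observation is that a Euclidean-unit null vector has both its first $n_1$ and last $n_2$ coordinates of Euclidean length $1/\sqrt 2$, so $D=\tfrac{1}{\sqrt 2}(S^{n_1-1}\times S^{n_2-1})$ with the two factors placed in the first $n_1$ and last $n_2$ coordinates respectively. From here each case is a computation of the normal bundle $N(D)$.

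For (1), $D$ consists of four points, so Corollary~\ref{cor:x-ray-compact}(1) gives non-injectivity on $C_0^\infty(\R^{1,1})$; moreover $N(D)$ is the union of just two lines in $\R^2$, hence not dense, and Theorem~\ref{thm:x-ray-schwartz} yields non-injectivity on $\schw(\R^{1,1})$ as well. For (2), when $n_1=1$ and $n_2\geq 2$ the set $D$ contains the connected arc $\{1/\sqrt 2\}\times\tfrac{1}{\sqrt 2}S^{n_2-1}$ having more than two points, so Corollary~\ref{cor:x-ray-compact}(2b) gives injectivity on $C_0^\infty$. For (4), with $n_1,n_2\geq 2$, I would show $N(D)=\R^{n_1+n_2}$: given any $(a,b)\in\R^{n_1}\times\R^{n_2}$, the hypothesis $n_i\geq 2$ lets us pick unit vectors $w_i\in S^{n_i-1}$ with $w_1\cdot a=0=w_2\cdot b$, and then $v=(w_1,w_2)/\sqrt 2\in D$ satisfies $v\cdot(a,b)=0$; Theorem~\ref{thm:x-ray-schwartz} then gives injectivity on $\schw$, and the inclusion $C_0^\infty\subset\schw$ extends it to compactly supported functions.

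The main step is the explicit identification of $N(D)$ in case~(3). A vector $(a,u)\in\R\times\R^{n_2}$ lies in $v^\perp$ for some $v=(\epsilon/\sqrt 2,w/\sqrt 2)\in D$ exactly when there exist $\epsilon\in\{\pm 1\}$ and $w\in S^{n_2-1}$ with $w\cdot u=-\epsilon a$; since $w\cdot u$ ranges over $[-|u|,|u|]$ and the sign of $\epsilon$ is free, this is solvable if and only if $|a|\leq|u|$. Therefore $N(D)$ is the closed cone $\{(a,u):|u|\geq|a|\}$, and by Theorem~\ref{thm:x-ray-schwartz} a Schwartz function $f$ lies in the kernel iff $\F f$ vanishes on this set. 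By continuity of $\F f$ this is equivalent to $\F f$ being supported in the closed complementary cone $\{v_1^2\geq v_2^2+\cdots+v_{1+n_2}^2\}$, which is precisely the timelike light cone of~(3). The antipodal symmetry of $D$ is what makes $\epsilon$ drop out of the final description, yielding absolute-value inequalities rather than two disjoint cones.
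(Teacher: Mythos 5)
Your proposal is correct and follows essentially the same route as the paper: each case is reduced to Theorem~\ref{thm:x-ray-schwartz} or Corollary~\ref{cor:x-ray-compact} via the set $D$ of Euclidean-unit null directions, with the paper merely asserting the normal bundle facts that you compute explicitly. Your identification $D=\tfrac{1}{\sqrt 2}(S^{n_1-1}\times S^{n_2-1})$ and the resulting computations of $N(D)$ in cases (3) and (4) are accurate and simply fill in details the paper leaves implicit.
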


Note that the function spaces and normal bundles are defined with respect to the Euclidean metric on~$\R^{n_1+n_2}$.
The function spaces and the results of the corollary are, however, invariant under Lorentz transformations, so the results have the correct invariance properties.
The result is also invariant under sufficiently regular conformal changes of the metric.

\begin{proof}[Proof of corollary~\ref{cor:minkowski}]
(1)
This follows from corollary~\ref{cor:x-ray-compact}(1) since~$D$ consists of four points.

(2)
This follows from corollary~\ref{cor:x-ray-compact}(2b) since~$D$ contains an arc.

(3)
This follows from theorem~\ref{thm:x-ray-schwartz} since the normal bundle of the ``unit light cone''~$L\cap S^{n_1+n_2-1}$ is precisely the closure of the complement of the timelike light cone.

(4)
This follows from theorem~\ref{thm:x-ray-schwartz} since now $N(L\cap S^{n_1+n_2-1})=\R^{n_1,n_2}$.
\end{proof}

\section{Null X-ray tomography on conformal Minkowski tori}
\label{sec:torus}

We quotient the Minkowski space~$\R^{n_1,n_2}$ by the lattice~$\Z^{n_1+n_2}$.
Since the metric on~$\R^{n_1,n_2}$ is translation invariant, we obtain a pseudo-Riemannian quotient manifold $\T^{n_1,n_2}\coloneqq\R^{n_1,n_2}/\Z^{n_1+n_2}$ which we call the Minkowski torus of signature $(n_1,n_2)$.

We point out that the lattice is not Lorentz invariant, so Minkowski tori have less symmetry than Minkowski spaces.
A conformal Minkowski torus is a Minkowski torus with the metric multiplied with a smooth conformal factor.

The Minkowski tori are closed manifolds, so maximal geodesics are closed or have infinite length.
We only include the closed ones.
The null periodic X-ray transform takes a function on the torus into its integrals over all periodic null geodesics.
This transform is injective if and only if a function can be recovered from the knowledge of these integrals.

\begin{theorem}
\label{thm:torus}
The null periodic X-ray transform is injective on a conformal Minkowski torus of signature $(n_1,n_2)$ if and only if $n_1\geq2$ and $n_2\geq2$.
If it is injective, it is injective on distributions.
If it is not injective, it is non-injective on the space of trigonometric polynomials.
\end{theorem}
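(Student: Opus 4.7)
The plan is to reduce to the flat metric by conformal invariance and then to carry out a Fourier analysis on the torus. As noted earlier, conformal changes only reparametrize null geodesics and put a Jacobian weight $\sqrt c$ on $f$, so it suffices to treat the flat Minkowski metric. I would expand any distribution $f\in\mathcal D'(\T^{n_1,n_2})$ as $f=\sum_{k\in\Z^{n_1+n_2}}\hat f(k)\,e_k$ with $e_k(x)=e^{2\pi i k\cdot x}$, using the standard Euclidean inner product natural to $\R^{n_1+n_2}/\Z^{n_1+n_2}$.

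The next step is to classify the closed null geodesics: each such geodesic is of the form $t\mapsto x_0+tv$, where $v$ is, up to positive scaling, a nonzero integer null vector, i.e.\ $v=(a,b)\in\Z^{n_1}\oplus\Z^{n_2}$ with $\abs{a}^2=\abs{b}^2$. Parametrizing by the period of a primitive integer null $v$, a direct computation gives
\[
\int_0^1 e_k(x_0+tv)\,\der t
=\begin{cases}e^{2\pi i k\cdot x_0},&k\cdot v=0,\\0,&k\cdot v\neq 0\end{cases}
\]
(using that $k\cdot v\in\Z$). By linearity and Fourier uniqueness, the null periodic X-ray transform of $f$ vanishes if and only if $\hat f(k)=0$ whenever $k$ is Euclidean-orthogonal to some nonzero integer null vector. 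Setting
\[
K=\{k\in\Z^{n_1+n_2}:k\cdot v\neq 0\text{ for every integer null }v\neq 0\},
\]
the kernel of the transform consists exactly of the distributions with Fourier support in $K$. Thus injectivity on distributions amounts to $K=\emptyset$, while any single Fourier mode $e_k$ with $k\in K$ is a nonzero trigonometric polynomial in the kernel; this handles both structural statements of the theorem.

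It remains to show $K=\emptyset$ iff $n_1,n_2\geq 2$. The negative side is immediate: if $n_1=1$ (the case $n_2=1$ is symmetric), then integer null vectors have the shape $(\pm\abs{b},b)$ with $b\in\Z^{n_2}$, $\abs{b}\in\N$, and $k=(1,0,\dots,0)\in K$ since $k\cdot v=\pm\abs{b}=0$ forces $v=0$. The hard direction is to show, for $n_1,n_2\geq 2$ and every $k=(p,q)\in\Z^{n_1+n_2}\setminus 0$, the existence of a nonzero integer null $v$ with $k\cdot v=0$. Here is a direct construction: pick indices $i_1\neq i_2\in\{1,\dots,n_1\}$ and $j_1\neq j_2\in\{1,\dots,n_2\}$ so that not all of $p_{i_1},p_{i_2},q_{j_1},q_{j_2}$ vanish (possible since $n_1,n_2\geq 2$ and $k\neq 0$), and support $v$ on these four coordinates as $a=\alpha e_{i_1}+\beta e_{i_2}$, $b=-\alpha e_{j_1}+\beta e_{j_2}$ with $\alpha=p_{i_2}+q_{j_2}$, $\beta=q_{j_1}-p_{i_1}$. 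A direct check gives $\abs{a}^2=\alpha^2+\beta^2=\abs{b}^2$ and $k\cdot v=(p_{i_1}-q_{j_1})\alpha+(p_{i_2}+q_{j_2})\beta=0$. This $v$ vanishes exactly when $p_{i_1}=q_{j_1}$ and $p_{i_2}=-q_{j_2}$; in that event an analogous formula with $\alpha=p_{i_1}+q_{j_1}$, $\beta=p_{i_2}-q_{j_2}$ and appropriate sign swaps yields another integer null vector orthogonal to $k$, which can fail to be nonzero only if additionally $p_{i_1}=-q_{j_1}$ and $p_{i_2}=q_{j_2}$, forcing together $p_{i_1}=p_{i_2}=q_{j_1}=q_{j_2}=0$, excluded by the choice of indices.

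The main obstacle is the Diophantine step in the last paragraph: producing, for every nonzero $k\in\Z^{n_1+n_2}$, an integer null vector Euclidean-orthogonal to it. The four-coordinate Pythagorean-type construction above handles this elementarily and also shows why the hypothesis $n_1,n_2\geq 2$ is sharp --- two free indices in each block are exactly what is needed to balance the parameters $\alpha$ and $\beta$ against the quadratic null condition and the linear orthogonality condition simultaneously.
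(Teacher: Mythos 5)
Your proposal is correct and follows essentially the same route as the paper: reduce to the flat metric, pass to Fourier coefficients to see that the kernel consists of distributions whose Fourier support avoids all integer frequencies orthogonal to a nonzero integer null vector, and then settle the resulting Diophantine question by an explicit four-coordinate null vector $(\alpha,\beta;-\alpha,\beta)$-type construction for $n_1,n_2\geq2$ together with the easy counterexample $k=(1,0,\dots,0)$ when $n_1=1$. The paper outsources the Fourier-side characterization (including the distributional case) to a citation and uses the nearly identical vector $v=(a,-b,b,-a)$ with a slightly more ad hoc treatment of the degenerate case, so the two arguments differ only in presentation.
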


\begin{theorem}
\label{thm:torus-ker}
Let $n_1\geq1$.
The kernel of the null periodic X-ray transform on a conformal Minkowski torus of signature $(n_1,1)$ is precisely the set of functions whose Fourier series is supported outside the set
\begin{equation}
\label{eq:K-def}
\begin{split}
K
&=
\{(k,t)\in\Z^{n_1}\times\Z;\exists(v,s)\in\Z^{n_1}\times\Z:
\\&\qquad
\abs{v}^2=s^2\neq0,v\cdot k+st=0\}.
\end{split}
\end{equation}
We always have
\begin{equation}
\label{eq:K-est}
\{(k,t)\in\Z^{n_1}\times\Z;\abs{t}=\abs{k}\}
\subset
K
\subset
\{(k,t)\in\Z^{n_1}\times\Z;\abs{t}\leq\abs{k}\}
.
\end{equation}
For $n_1=1$ we have
\begin{equation}
\label{eq:K-eq}
K
=
\{(k,t)\in\Z\times\Z;\abs{t}=\abs{k}\},
\end{equation}
for $n_1=2$
\begin{equation}
\label{eq:K-eq2}
K
=
\{(k,t)\in\Z^2\times\Z;\sqrt{\abs{k}^2-\abs{t}^2}\in\N\},
\end{equation}
and for $n_1=3$
\begin{equation}
\label{eq:K-eq3}
K
=
\{(k,t)\in\Z^3\times\Z;\abs{k}^2-\abs{t}^2\text{ is a sum of two integer squares}\}.
\end{equation}
\end{theorem}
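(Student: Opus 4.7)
The plan is to combine a standard Fourier-analytic reduction on the torus with a small amount of elementary and Diophantine analysis. First, following the framework of~\cite{I:torus}, I would expand a function (or distribution) on~$\T^{n_1,1}$ in a Fourier series $f(x)=\sum_{(k,t)\in\Z^{n_1+1}}\hat f(k,t)e^{2\pi i(k,t)\cdot x}$, noting that every closed null geodesic is, up to conformal reparametrization, a line with primitive direction $(v,s)\in\Z^{n_1+1}\setminus\{0\}$ satisfying $|v|^2=s^2$. Integrating the exponential $e^{2\pi i(k,t)\cdot x}$ over such a geodesic returns $1$ if $(k,t)\cdot(v,s)=0$ and $0$ otherwise, so varying the base point recovers precisely those Fourier coefficients $\hat f(k,t)$ with $(k,t)\cdot(v,s)=0$. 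Taking the union over all admissible $(v,s)$ identifies the kernel with objects whose Fourier series is supported outside the set~$K$ of~\eqref{eq:K-def}.

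Next I would establish the general inclusions~\eqref{eq:K-est}. The upper one is Cauchy--Schwarz: any witness $(v,s)$ for $(k,t)\in K$ satisfies $|st|=|v\cdot k|\leq|v||k|=|s||k|$, and $s\neq0$ gives $|t|\leq|k|$. For the lower inclusion, when $k\neq0$ and $|t|=|k|$ the explicit choice $(v,s)=(-tk,|k|^2)$ is a valid null witness, since $|v|^2=t^2|k|^2=|k|^4=s^2$ and $v\cdot k+ts=-t|k|^2+t|k|^2=0$; the only remaining case $k=t=0$ is witnessed by any nonzero null lattice vector, for example $(e_1,1)$.

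It remains to identify~$K$ in the three low-dimensional cases. For $n_1=1$, the equation $v^2=s^2$ forces $v=\pm s$, so the linear condition reduces to $s(\pm k+t)=0$ and hence to $|t|=|k|$, giving~\eqref{eq:K-eq}. For $n_1=2$, under the generic assumption $k_2\neq0$ one eliminates $v_2$ from the linear relation and substitutes into $v_1^2+v_2^2=s^2$, producing a quadratic in~$v_1$ with discriminant proportional to $s^2k_2^2(|k|^2-t^2)$; rational solvability forces $|k|^2-t^2$ to be a perfect square, and conversely the explicit triple $(v_1,v_2,s)=(k_2m-k_1t,-k_2t-k_1m,|k|^2)$ with $m=\sqrt{|k|^2-t^2}$ realizes any such $(k,t)$ with $k\neq0$, while $(k,t)=(0,0)$ is covered e.g.\ by $(v,s)=(1,0,1)$. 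This yields~\eqref{eq:K-eq2}.

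The case $n_1=3$ is where the real obstacle lies, since it requires genuine number-theoretic input. Given a witness $(v,s)\in\Z^4$, the cross product $w=v\times k\in\Z^3$ lies in $k^\perp$ and, by the Lagrange identity, satisfies $|w|^2=|v|^2|k|^2-(v\cdot k)^2=s^2(|k|^2-t^2)$, so one has a lattice point in $k^\perp\cap\Z^3$ whose squared norm is a multiple of $|k|^2-t^2$; a descent argument using the binary quadratic form induced on $k^\perp\cap\Z^3$ should then extract integers $a,b$ with $a^2+b^2=|k|^2-t^2$. In the other direction, one must build a Pythagorean quadruple on the hyperplane $v\cdot k+ts=0$ from such a decomposition, exploiting multiplicativity of norms from the Gaussian integers~$\Z[i]$. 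Both directions are delicate and arithmetic in nature, and I would defer the full argument to Section~\ref{sec:diophantos}, which is devoted precisely to this purpose.
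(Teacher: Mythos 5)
Your Fourier reduction, the two inclusions in \eqref{eq:K-est}, and the cases $n_1=1,2$ are all correct. The reduction via \cite{I:torus} and the recovery of $\hat f(k,t)$ exactly when $(k,t)\perp(v,s)$ for some integer null $(v,s)$ is the same as the paper's. Your Cauchy--Schwarz argument for the upper inclusion and the explicit witness $(v,s)=(-tk,\abs{k}^2)$ for the lower one are cleaner than, but equivalent to, the paper's. For $n_1=2$ you genuinely diverge: the paper parametrizes primitive Pythagorean triples as $(m^2-n^2,2mn,m^2+n^2)$ and reduces to a quadratic in $\alpha=n/m$ whose discriminant is $k_1^2+k_2^2-t^2$, whereas you eliminate $v_2$ directly and read off the discriminant $s^2k_2^2(\abs{k}^2-t^2)$; both routes give \eqref{eq:K-eq2}, and yours avoids the normalization to primitive triples, which is a small but real simplification.

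The case $n_1=3$, however, is not proved, and this is where the theorem's real content lies. Your forward direction observes that $w=v\times k$ satisfies $w\cdot k=0$ and $\abs{w}^2=s^2(\abs{k}^2-t^2)$, and then asserts that a ``descent argument'' on the binary form induced on $k^\perp\cap\Z^3$ extracts a representation of $D=\abs{k}^2-t^2$ as a sum of two squares. That inference does not follow from what you have written: the existence of a lattice vector of squared norm $s^2D$ in some rank-two sublattice of $\Z^3$ does not imply $D$ is a sum of two squares (already $\abs{w}^2=3=1^2+1^2+1^2$ with $s=1$ shows that squared norms of integer vectors need not be sums of two squares), so everything depends on the specific binary form on $k^\perp\cap\Z^3$, whose class you have not computed; no descent is actually carried out. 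The converse direction (building a Pythagorean quadruple on the hyperplane from a decomposition $D=a^2+b^2$) is likewise only gestured at. The paper's proof of Lemma~\ref{lma:3d-diofantos} instead parametrizes Pythagorean quadruples via $p=s-v_3$, sets $\alpha=v_1/p$, $\beta=v_2/p$, and after the substitution $x=(k_3+t)\alpha+k_1$, $y=(k_3+t)\beta+k_2$ reduces both directions to the single identity $x^2+y^2=D$ over the rationals, which is then converted to an integer statement using the classical two-squares criterion (multiplying by $c^2$ does not change the parity of exponents of primes congruent to $3$ modulo $4$). You would need to supply an argument of comparable substance; deferring it to Section~\ref{sec:diophantos} is circular, since that section is precisely what you are being asked to prove.
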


A well known theorem (see eg.~\cite[Theorem~366]{HW:number-theory}) states that a positive integer is a sum of two integer squares if and only if every prime factor which is~$3$ modulo~$4$ appears an even number of times.

For a precise definition of what it means to integrate a distribution over a closed geodesic on a torus, see~\cite{I:torus}.

Let us compare corollary~\ref{cor:minkowski}(3) and theorem~\ref{thm:torus-ker}.
They both state roughly that if there is only one dimension of time ($n_1=1$ or $n_2=1$), then having vanishing null X-ray transform corresponds to having the Fourier transform supported in the timelike light cone.
In a Minkowski space this is true as stated, but on a Minkowski torus there is an additional condition that a Diophantine system must be solvable.
Our observations suggest that the Diophantine system is always solvable if $n_1\geq5$.
We will discuss this condition more closely in section~\ref{sec:diophantos}.

\subsection{Proofs of theorems~\ref{thm:torus} and~\ref{thm:torus-ker}}

A conformal change of metric only reparametrizes null geodesics, so we may assume the metric to be the standard Minkowski metric.

Let us define
\begin{equation}
\begin{split}
K
&=
\{(k,p)\in\Z^{n_1}\times\Z^{n_2};\exists(v,w)\in\Z^{n_1}\times\Z^{n_2}:
\\&\qquad
\abs{v}^2=\abs{w}^2\neq0,v\cdot k+w\cdot p=0\}.
\end{split}
\end{equation}
If $n_2=1$, this agrees with~\eqref{eq:K-def}.

Our problem is to recover $f\colon\T^{n_1,n_2}\to\C$ from the knowledge of the function
\begin{equation}
R_vf(x)
=
\int_0^1 f(x+tv)\der t
\end{equation}
for all non-zero null vectors $v\in\Z^{n_1+n_2}$.

It follows from~\cite[Equation~(6) and Lemma~10]{I:torus} that the Fourier coefficient~$\hat f(k,p)$ can be recovered from this data if and only if $(k,p)\in K$.
Specifically,
\begin{equation}
\widehat{R_vf}(k)
=
\begin{cases}
\hat f(k) & \text{if }v\cdot k=0\\
0 & \text{if }v\cdot k\neq0.
\end{cases}
\end{equation}
That is, the kernel of the null X-ray transform on~$\T^{n_1,n_2}$ is precisely the set of functions whose Fourier series vanishes on~$K$.
This is true also for distributions.

Let us show that if $n_1,n_2\geq2$, then $K=\Z^{n_1+n_2}$.
Let 
\begin{equation}
k=((k_{11},\dots,k_{1n_1}),(k_{21},\dots,k_{2n_2}))\in\Z^{n_1}\times\Z^{n_2}.
\end{equation}
It suffices to show that $(k_{11},k_{12};k_{21},k_{22})$ is in $K\subset\Z^{2+2}$.
By symmetries we may assume that all components are positive and and $k_{i1}<k_{i2}$ for both $i=1,2$.
We will show that $v=(a,-b,b,-a)$ is the desired vector for suitable $a,b\in\Z$.
We need to have
\begin{equation}
a(k_{11}-k_{22})-b(k_{12}-k_{21})=0.
\end{equation}
An example is $a=k_{12}-k_{21}$ and $b=k_{11}-k_{22}$.
If this leads to $a=b=0$, then $k=(c,d;d,c)$ and we may choose $a=d$ and $b=c$.
If this leads to $v=0$ as well, then $k=0$ and we can take any~$v$.
Thus indeed $k\in K$ and so $K=\Z^{n_1+n_2}$.

It follows from~\eqref{eq:K-est} that $K\neq\Z^{n_1+n_2}$ if $n_1=1$ or $n_2=1$.
This leads to non-injectivity; one can simply take the inverse Fourier transform of any non-trivial function vanishing in~$K$.

Let us then prove~\eqref{eq:K-est}.
Suppose $n_2=1$, $k_2\neq0$ and $k=(k_1,k_2)\in K\subset\Z^{n_1}\times\Z$.
Then $v=(v_1,v_2)$ satisfies $v_2=ak_2$ for some $a\in\Q$ and $-a\abs{k_2}^2=k_1\cdot v_1$.
The last equation with $\abs{v_1}=\abs{v_2}$ yields $\abs{a}\cdot\abs{k_2}^2\leq\abs{k_1}\cdot\abs{a}\cdot\abs{k_2}$.
This implies $\abs{k_2}\leq\abs{k_1}$.

If $\abs{k_1}=\abs{k_2}$, then $k\in K$; one can choose $v=k_1$ and $s=-k_2$.
To see~\eqref{eq:K-eq}, one needs~\eqref{eq:K-est} and the symmetry in interchanging~$n_1$ and~$n_2$.
A characterization of the set~$K$ is much harder when $n_1>1$.
For $n_1=2$ this is done in lemma~\ref{lma:2d-diofantos} and for $n_1=3$ in lemma~\ref{lma:3d-diofantos} below.

These observations prove theorems~\ref{thm:torus} and~\ref{thm:torus-ker}.

\subsection{A Diophantine problem}
\label{sec:diophantos}

Characterizing the kernel of the null periodic X-ray transform in signature $(n,1)$ requires finding a characterization of the set~$K$.
This boils down to analyzing the dependence of the solvability of a Diophantine system on integer parameters.
We do that in this section for $n=2$ and $n=3$.

\begin{lemma}
\label{lma:2d-diofantos}
Let $k_1,k_2,t\in\Z$.
There exists a solution $(v_1,v_2,s)\in\Z^3$ to
\begin{equation}
\label{eq:2d-diofantos}
\begin{cases}
v_1^2+v_2^2=s^2\neq0\\
k_1v_1+k_2v_2+ts=0
\end{cases}
\end{equation}
if and only if $\sqrt{k_1^2+k_2^2-t^2}$ is integer.
\end{lemma}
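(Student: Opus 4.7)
My plan is to prove the two implications separately; the forward direction is essentially forced by a Lagrange-type identity, while the backward direction requires a bit of cleverness in choosing an ansatz.

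For the forward direction, suppose $(v_1,v_2,s)$ is a solution to \eqref{eq:2d-diofantos}. I would apply the Brahmagupta--Fibonacci identity
\begin{equation*}
(k_1 v_1+k_2 v_2)^2 + (k_1 v_2-k_2 v_1)^2 = (k_1^2+k_2^2)(v_1^2+v_2^2).
\end{equation*}
Using the two relations of \eqref{eq:2d-diofantos} the left-hand side becomes $t^2 s^2+(k_1 v_2-k_2 v_1)^2$ and the right-hand side equals $(k_1^2+k_2^2)s^2$, so
\begin{equation*}
(k_1 v_2-k_2 v_1)^2 = (k_1^2+k_2^2-t^2)\,s^2.
\end{equation*}
Since $s\neq 0$, the number $\sqrt{k_1^2+k_2^2-t^2}$ equals the rational $\abs{k_1 v_2-k_2 v_1}/\abs{s}$; the standard elementary fact that a non-negative integer whose square root is rational must be a perfect square then closes this direction.

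For the backward direction, write $k_1^2+k_2^2-t^2=m^2$ with $m\geq 0$. The degenerate case $(k_1,k_2)=(0,0)$ forces $t=m=0$ and then $(v_1,v_2,s)=(3,4,5)$ works trivially. Otherwise, reading the forward computation in reverse suggests the ansatz: set $s\coloneqq k_1^2+k_2^2$ and impose the linear system $k_1 v_1+k_2 v_2=-ts$, $k_1 v_2-k_2 v_1=ms$. Inverting the rotation-like matrix on the left gives the explicit integer candidate
\begin{equation*}
v_1=-(k_1 t+k_2 m),\qquad v_2=k_1 m-k_2 t,\qquad s=k_1^2+k_2^2.
\end{equation*}
The linear constraint of \eqref{eq:2d-diofantos} then reduces to a one-line cancellation, and the Pythagorean equation follows from $v_1^2+v_2^2=(k_1^2+k_2^2)(t^2+m^2)=(k_1^2+k_2^2)^2=s^2$, where the hypothesis $t^2+m^2=k_1^2+k_2^2$ is used.

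The main obstacle, as I see it, is producing the right ansatz in the backward direction; the forward direction is essentially dictated by the Brahmagupta--Fibonacci identity together with the elementary rationality argument. Once the forward calculation is in hand, reading it backwards identifies the correct pair of linear combinations of $v_1,v_2$ to prescribe, and multiplying through by $k_1^2+k_2^2$ clears all denominators so that the solution lands in $\Z^3$.
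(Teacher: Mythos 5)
Your proof is correct, and it takes a genuinely different route from the paper's. The paper normalizes to a primitive Pythagorean triple, invokes the parametrization $(v_1,v_2,s)=(m^2-n^2,2mn,m^2+n^2)$, and turns the linear constraint into a quadratic in $\alpha=n/m$ whose discriminant is $4(k_1^2+k_2^2-t^2)$; rationality of the root then forces this to be a perfect square, and conversely a rational root is unwound back into a triple. You instead get the forward direction in one stroke from the composition identity, which yields $(k_1v_2-k_2v_1)^2=(k_1^2+k_2^2-t^2)s^2$ and hence that $k_1^2+k_2^2-t^2$ is a nonnegative integer with rational square root, and for the converse you exhibit the completely explicit solution $v_1=-(k_1t+k_2m)$, $v_2=k_1m-k_2t$, $s=k_1^2+k_2^2$ (with the degenerate case $k_1=k_2=0$ handled separately). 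I checked the verifications: $k_1v_1+k_2v_2=-(k_1^2+k_2^2)t=-ts$ and $v_1^2+v_2^2=(k_1^2+k_2^2)(t^2+m^2)=s^2$ both hold. What your approach buys is the avoidance of all the bookkeeping the parametrization requires (sign normalization, the $\gcd$ reduction, the fact that the formula $(m^2-n^2,2mn)$ only produces triples with $v_2$ even, and the edge cases $t=k_1$ or $k_2=0$ where the quadratic degenerates), plus a closed-form solution rather than an existence argument; what the paper's approach buys is a template that generalizes to the signature-$(3,1)$ case via Pythagorean quadruples, which is how Lemma~\ref{lma:3d-diofantos} is proved.
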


\begin{proof}
Suppose $k_1,k_2,t\in\Z$ are such that there are $v_1,v_2,s\in\Z$ solving~\eqref{eq:2d-diofantos}.
Existence of solutions is clearly independent of signs of~$k_1$, $k_2$ and~$t$.
Therefore we are free to assume that $v_1,v_2,s\geq0$.

By scaling $(v_1,v_2,s)$ we may assume that $\gcd(v_1,v_2,s)=1$.
Since $(v_1,v_2,s)$ is then a primitive Pythagorean triple, there are integers $m\geq n\geq0$ so that $s=m^2+n^2$, $v_1=m^2-n^2$ and $v_2=2mn$ (and $m\geq1$).
Let us write $\alpha=n/m$.

The condition $k_1v_1+k_2v_2+ts=0$ becomes
\begin{equation}
\label{eq:diofantos-alpha}
(t-k_1)\alpha^2+2k_2\alpha+(t+k_1)=0
.
\end{equation}
If $t=k_1$, we have $\alpha=-k_1/k_2$.
If $t\neq k_1$, we have
\begin{equation}
\alpha
=
\frac{-k_2\pm\sqrt{k_1^2+k_2^2-t^2}}{t-k_1}
.
\end{equation}
This~$\alpha$ must be rational, so $\sqrt{k_1^2+k_2^2-t^2}$ must be an integer.

Conversely, if $\sqrt{k_1^2+k_2^2-t^2}$ is an integer (which is always true if $t=k_1$), then one can find a rational solution~$\alpha$ to~\eqref{eq:diofantos-alpha}.
This~$\alpha$ gives rise to some integers~$m$ and~$n$ so that $\alpha=m/n$, and these in turn produce a solution to the original system~\eqref{eq:2d-diofantos}.
\end{proof}

\begin{lemma}
\label{lma:3d-diofantos}
Let $k_1,k_2,t\in\Z$.
There exists a solution $(v_1,v_2,v_3,s)\in\Z^4$ to
\begin{equation}
\label{eq:3d-diofantos}
\begin{cases}
v_1^2+v_2^2+v_3^2=s^2\neq0\\
k_1v_1+k_2v_2+k_3v_3+ts=0
\end{cases}
\end{equation}
if and only if $k_1^2+k_2^2+k_3^2-t^2$ is a sum of two integer squares.
\end{lemma}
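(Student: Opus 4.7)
The plan is to pass from the Diophantine system to a question about rational points on a plane conic. An integer solution $(v_1,v_2,v_3,s)$ of~\eqref{eq:3d-diofantos} has $s\neq 0$ by hypothesis, and the point $x=v/s\in\Q^3$ lies on the unit sphere $x_1^2+x_2^2+x_3^2=1$ and on the affine hyperplane $k_1x_1+k_2x_2+k_3x_3+t=0$; conversely, any rational point on this intersection yields an integer solution by clearing denominators. So I need to determine when this sphere-hyperplane intersection carries a rational point.

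I would first dispense with the degenerate cases: if $k=0$ and $t\neq 0$ there is no nontrivial solution and $k_1^2+k_2^2+k_3^2-t^2<0$; if $k_1^2+k_2^2+k_3^2<t^2$ there are no real solutions; and if $k_1^2+k_2^2+k_3^2=t^2\neq 0$ one checks directly that $(v,s)=(-k,t)$ is a solution while $0$ is trivially a sum of two squares. So from now on assume $k_1^2+k_2^2+k_3^2>t^2$, so that the intersection is a genuine circle.

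Next I apply stereographic projection from the rational point $(1,0,0)$, using
\[
x_1=\frac{y^2+z^2-1}{y^2+z^2+1},\quad x_2=\frac{2y}{y^2+z^2+1},\quad x_3=\frac{2z}{y^2+z^2+1},
\]
which gives a rationality-preserving bijection between the unit sphere minus the north pole and $\Q^2$. Substituting into the hyperplane equation and clearing the positive denominator yields
\[
(k_1+t)(y^2+z^2)+2k_2y+2k_3z+(t-k_1)=0.
\]
If $k_1+t=0$ then the north pole $(1,0,0)$ itself lies on the intersection and gives the rational solution $(v,s)=(1,0,0,1)$, while $k_1^2+k_2^2+k_3^2-t^2=k_2^2+k_3^2$ is automatically a sum of two squares. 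If $k_1+t\neq 0$, completing the square rewrites the equation as a circle in the $(y,z)$-plane with rational center and squared radius $R^2=(k_1^2+k_2^2+k_3^2-t^2)/(k_1+t)^2$.

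A rational circle with rational center possesses a rational point if and only if its squared radius is a sum of two rational squares. Multiplying through by the rational square $(k_1+t)^2$ converts this to the condition that $k_1^2+k_2^2+k_3^2-t^2$ is a sum of two rational squares, which for a non-negative integer is equivalent, via the classical prime-factorization characterization of sums of two squares, to being a sum of two integer squares. The main technical nuisances will be handling the case distinction $k_1+t=0$ versus $k_1+t\neq 0$ cleanly and justifying the rational-to-integer passage for sums of two squares; the arithmetic content of the lemma is concentrated entirely in the single step that reads off $R^2$ in terms of $k$ and $t$.
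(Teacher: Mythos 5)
Your proposal is correct and follows essentially the same route as the paper: the paper's parametrization of Pythagorean quadruples via $p=s-v_3$ is exactly stereographic projection (from the pole in the $v_3$-direction rather than your $v_1$-direction), and both arguments then complete the square to reduce to $x^2+y^2=D$ over $\Q$ and invoke the classical two-squares theorem to pass from rational to integer representations. The only cosmetic differences are your explicit geometric framing and your handling of the degenerate/boundary cases, which the paper largely leaves implicit.
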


\begin{proof}
Let us denote $D=k_1^2+k_2^2+k_3^2-t^2$.
Similarly to the previous proof, we may use a parametrization of Pythagorean quadruples.
Suppose $(v_1,v_2,v_3,s)$ is a solution consisting of non-negative numbers with $\gcd(v_1,v_2,v_3,s)=1$.

There is a factor~$p$ of $v_1^2+v_2^2$ so that
\begin{equation}
\begin{split}
v_3&=\frac1{2p}(v_1^2+v_2^2-p^2)\quad\text{and}\\
s&=\frac1{2p}(v_1^2+v_2^2+p^2).
\end{split}
\end{equation}
To see this, simply take $p=s-v_3$.

Let $\alpha=v_1/p$ and $\beta=v_2/p$.
The condition $k_1v_1+k_2v_2+k_3v_3+ts=0$ then becomes
\begin{equation}
2k_1\alpha+2k_2\beta+k_3(\alpha^2+\beta^2-1)+t(\alpha^2+\beta^2+1)=0.
\end{equation}
We consider separately the cases $k_3+t=0$ and $k_3+t\neq0$.

If $k_3+t=0$, we have
\begin{equation}
2k_1\alpha+2k_2\beta+2k_3=0.
\end{equation}
This always has rational solutions~$\alpha$ and~$\beta$.
Notice that if $k_3+t=0$, then $D=k_1^2+k_2^2$, a sum of two squares.

If $k_3+t\neq0$, we denote $x=(k_3+t)\alpha+k_1$ and $y=(k_3+t)\beta+k_2$.
Our equation becomes simply $x^2+y^2=D$.
Let $c\in\N$ be such that $cx,cy\in\Z$.
Now $(cx)^2+(cy)^2=c^2D$.

A positive integer is a sum of two integer squares if and only if every prime factor which is~$3$ modulo~$4$ appears an even number of times.
Since~$c^2$ contains each prime an even number of times, it follows that $x^2+y^2=D$ has rational solutions~$(x,y)$ if and only if it has integer solutions~$(x,y)$.

The argument can be reversed.
The special case $k_3+t=0$ is simple, so let us ignore it.
If~$D$ is a sum of two squares, we find integers~$x$ and~$y$ so that $x^y+y^2=D$.
From these integers we calculate the rationals~$\alpha$ and~$\beta$.
Assuming $p=1$, we find a rational solution $(v_1,v_2,v_3,s)$ to our original pair of equations.
Scaling this by a suitable integer gives an integer solution.
\end{proof}

We have shown that for $n\in\{1,2,3\}$ the there exists a solution $(v,s)\in\Z^n\times\Z$ to
\begin{equation}
\label{eq:nd-diofantos}
\begin{cases}
\abs{v}^2=s^2\neq0\\
k\cdot v+ts=0
\end{cases}
\end{equation}
if and only if $\abs{k}^2-t^2$ is a sum of $n-1$ squares of integers.
We do not know whether or not this is true for $n\geq4$.

We do not pursue the study of Dipohantine equations further here, but we mention two related theorems.
By a theorem of Legendre a natural number can be expressed as the sum of three integer squares if and only if it is not of the form $4^x(8y+7)$ for $x,y\in\N$.
By a theorem of Lagrange every natural number is expressible as the sum of four integer squares.
This \emph{suggests} but does not prove a simple characterization of the kernel for $n_1\geq4$ when $n_2=1$.

\section*{Acknowledgements}

Some of this work was completed during the author's visit to the University of Washington, Seattle, and he is indebted for the hospitality and support offered there; this inculdes financial support from Gunther Uhlmann's NSF grant DMS-1265958.
The author was also partially supported by an ERC Starting Grant (grant agreement no 307023).
He is also grateful to Gunther Uhlmann, Todd Quinto and Mikko Salo for discussions.

\bibliographystyle{abbrv}
\bibliography{ip}

\begin{thebibliography}{10}

\bibitem{B:partial-data-xrt}
A.~K. Begmatov.
\newblock A certain inversion problem for the ray transform with incomplete
  data.
\newblock {\em Siberian Mathematical Journal}, 42(3):428--434, 2001.

\bibitem{B:wave-eq}
M.~Bellassoued and D.~D.~S. Ferreira.
\newblock Stability estimates for the anisotropic wave equation from the
  dirichlet-to-neumann map.
\newblock {\em Inverse Problems and Imaging}, 5(4):745--773, 2011.

\bibitem{A:time-dep-coeff}
I.~{Ben A{\"i}cha}.
\newblock {Stability estimate for a hyperbolic inverse problem with
  time-dependent coefficient}.
\newblock {\em Inverse Problems}, 31(12):125010, 2015.

\bibitem{BQ:line-complex}
J.~Boman and E.~T. Quinto.
\newblock Support theorems for real analytic radon transforms on line
  complexes.
\newblock {\em Transactions of the American Mathematical Society},
  335:877--890, 1993.

\bibitem{F:limited-angle-thesis}
J.~Frikel.
\newblock {\em {Reconstructions in limited angle x-ray tomography:
  Characterization of classical reconstructions and adapted curvelet sparse
  regularization}}.
\newblock PhD thesis, Technischen Universit\"at M\"unchen, 2013.

\bibitem{G:21cosmology}
V.~Guillemin.
\newblock {\em Cosmology in $(2 + 1)$-Dimensions, Cyclic Models, and
  Deformations of $M_{2,1}$}, volume 121 of {\em Annals of Mathematics
  Studies}.
\newblock Princeton University Press, 1989.

\bibitem{HW:number-theory}
G.~H. Hardy and E.~M. Wright.
\newblock {\em {An introduction to the theory of numbers}}.
\newblock Oxford University Press, 4. edition, 1960.

\bibitem{I:torus}
J.~Ilmavirta.
\newblock On {R}adon transforms on tori.
\newblock {\em Journal of Fourier Analysis and Applications}, 21(2):370--382,
  2015.

\bibitem{KO:time-dep-coeff}
Y.~{Kian} and L.~{Oksanen}.
\newblock {Recovery of time-dependent coefficient on Riemanian manifold for
  hyperbolic equations}.
\newblock 2016.
\newblock \href{http://arxiv.org/abs/1606.07243}{arXiv:1606.07243}.

\bibitem{LOSU:cosmic-strings}
M.~{Lassas}, L.~{Oksanen}, P.~{Stefanov}, and G.~{Uhlmann}.
\newblock {On the inverse problem of finding cosmic strings and other
  topological defects}.
\newblock 2015.
\newblock \href{http://arxiv.org/abs/1505.03123}{arXiv:1505.03123}.

\bibitem{M:hyperbolic-dn}
C.~Montalto.
\newblock Stable determination of a simple metric, a covector field and a
  potential from the hyperbolic dirichlet-to-neumann map.
\newblock {\em Communications in Partial Differential Equations},
  39(1):120--145, 2014.

\bibitem{O:orlov-condition}
S.~S. Orlov.
\newblock Theory of three-dimensional reconstruction. 1. conditions of a
  complete set of projections.
\newblock {\em Soviet Physics, Crystallography}, 20:312--314, 1976.

\bibitem{PSU:tensor-surface}
G.~P. Paternain, M.~Salo, and G.~Uhlmann.
\newblock Tensor tomography on surfaces.
\newblock {\em Invent. Math.}, 193(1):229--247, 2013.

\bibitem{PSU:tensor-survey}
G.~P. Paternain, M.~Salo, and G.~Uhlmann.
\newblock Tensor tomography: progress and challenges.
\newblock {\em Chin. Ann. Math. Ser. B}, 35(3):399--428, 2014.

\bibitem{PSU:anosov-mfld}
G.~P. Paternain, M.~Salo, and G.~Uhlmann.
\newblock Invariant distributions, beurling transforms and tensor tomography in
  higher dimensions.
\newblock {\em Mathematische Annalen}, 363(1--2):305--362, 2015.

\bibitem{RR:hyperbolic}
A.~Ramm and Rakesh.
\newblock {Property $C$ and an inverse problem for a hyperbolic equation}.
\newblock {\em Journal of Mathematical Analysis and Applications},
  156(1):209--219, 1991.

\bibitem{RS:wave-eq}
A.~G. Ramm and J.~Sj{\"o}strand.
\newblock An inverse problem of the wave equation.
\newblock {\em Mathematische Zeitschrift}, 206(1):119--130, 1991.

\bibitem{S:time-dep-potential}
R.~Salazar.
\newblock Determination of time-dependent coefficients for a hyperbolic inverse
  problem.
\newblock {\em Inverse Problems}, 29(9):095015, 2013.

\bibitem{S:light-ray-support}
P.~Stefanov.
\newblock {Support theorems for the Light Ray transform on analytic Lorentzian
  manifolds}.
\newblock {\em Proceedings of the American Mathematical Society}, 2015.
\newblock To appear, \href{http://arxiv.org/abs/1504.01184}{arXiv:1504.01184}.

\bibitem{SU:x-ray-book}
P.~Stefanov and G.~Uhlmann.
\newblock {\em {Microlocal Analysis and Integral Geometry}}.
\newblock Book in progress.

\bibitem{SY:lorentz-dn}
P.~Stefanov and Y.~Yang.
\newblock {The Inverse Problem for the Dirichlet-to-Neumann map on Lorentzian
  manifolds}.
\newblock 2016.
\newblock \href{http://arxiv.org/abs/1607.08690}{arXiv:1607.08690}.

\bibitem{S:scattering89}
P.~D. Stefanov.
\newblock Uniqueness of the multi-dimensional inverse scattering problem for
  time dependent potentials.
\newblock {\em Mathematische Zeitschrift}, 201(4):541--559, 1989.

\bibitem{W:t-dep-potential}
A.~Waters.
\newblock Stable determination of x-ray transforms of time dependent potentials
  from partial boundary data.
\newblock {\em Communications in Partial Differential Equations},
  39(12):2169--2197, 2014.

\end{thebibliography}

\end{document}